\newcommand{\R}{\mathbb{R}}
\newcommand{\C}{\mathbb{C}}
\newcommand{\N}{\mathbb{N}}
\newtheorem{theorem}{Theorem}
\newtheorem{definition}{Definition}
\newtheorem{lemma}{Lemma}
\newtheorem{proposition}{Proposition}
\newtheorem{remark}{Remark}
\newenvironment{proof}[1][Proof]{\textbf{#1.} }
     {\noindent \rule{0.5em}{0.5em}}
\title{Global time-renormalization of the gravitational $N$-body problem}
\author[$\dagger$]{M. Anto\~nana}
\author[$\star$]{P. Chartier}
\author[$\dagger$]{J. Makazaga}
\author[$\dagger$]{A. Murua}
\affil[$\dagger$]{University of the Basque Country (UPV/EHU),  Donostia-San Sebasti\'an, Spain.}
\affil[$\star$]{Univ Rennes, INRIA-IRMAR, Rennes, France. }
\begin{document}
\maketitle
\begin{abstract}
This work considers the {\em gravitational} $N$-body problem and introduces global time-renormalization {\em functions} that allow the efficient numerical integration with fixed time-steps. First, a lower bound of the radius of convergence of the solution to the original equations is derived, which suggests an appropriate time-renormalization. In the new fictitious time $\tau$, it is then proved that any solution exists for all $\tau \in \mathbb{R}$, and that it is uniquely extended as a holomorphic function to a strip of fixed width. As a by-product, a global power series representation of the solutions of the $N$-body problem is obtained. Noteworthy, our global time-renormalizations remain valid in the limit when one of the masses vanishes. Finally, numerical experiments show the efficiency of the new time-renormalization functions for some $N$-body problems with close encounters. 
\end{abstract}

\section{Introduction}
In this paper, we are concerned with the solution of the so-called $N$-body problem, which considers $N$ masses $m_i, \, i = 1,\ldots,N$ in a three-dimensional space, moving under the influence of gravitational forces. Each mass $m_i$ will be described by its {\em position} $q_i \in \R^3$ and its {\em velocity} $v_i \in \R^3$ . Following Newton, the product of  mass and acceleration $m_i \frac{d^2 q_i}{dt^2}$ is equal to the sum of the forces applied on this mass and the gravitational force exerted on mass $m_i$ by a single mass $m_j$ is given by
$$
F_{ij} =\frac{G m_i m_j}{\|q_j-q_i\|^3} (q_j-q_i), 
$$
where $G$ is the gravitational constant and $\|q_j-q_i\|$ is the distance between $q_i$ and $q_j$ (in the euclidean norm).  The $N$-body equations, defined for positions $q$ outside the set 
\begin{equation*}
\Delta = \{(q_1,\ldots,q_N) \in \mathbb{R}^{3N}\ : \ q_i=q_j \mbox{ for some } i \neq j \},
\end{equation*}
are then obtained by summing over all masses 
\begin{equation} 
\label{eq:Nbody2}
m_i \frac{d^2 q_i}{dt^2}= \sum_{j \neq i} \frac{G m_i m_j}{\|q_j-q_i\|^3} (q_j-q_i).
\end{equation}

In physics and in celestial mechanics in particular,  the $N$-body problem is of great importance for the prediction of the dynamics of objects interacting with each other. At first, the motivation for considering this problem was the desire to understand how the sun, the moon and other planets  and stars are moving. Later on, other questions like the {\em stability} of the Solar System became subjects of much inquiry in astronomy. For instance, Jacques Laskar of the {\em Bureau des Longitudes} in Paris published in 1989 the results of numerical integrations of the Solar System over $200$ million years \cite{laskar} and showed that the earth's orbit is chaotic: a mere error of  $15$ meters in its position today makes it impossible to predict its motion over $100$ million years.

In fact, the problem of finding the general solution of the $N$-body problem was already considered very important and challenging in the late $19^{th}$ century when King Oscar II of Sweden  established a prize for anyone who could represent the solution of the $N$-body problem in the form of a convergent series (in a variable that is some known function of time),  under the assumption that no two points ever collide. Although no one could manage to solve the original problem, the prize was awarded to Henri Poincar\'e for his seminal contribution \cite{poincare}. In  1909, the original problem was finally solved  for $N = 3$ by Karl Fritiof Sundman \cite{sundman},  a Finnish mathematician, who was awarded a prize by the French Academy of Science for this work,
and  in the 1990s, Qiu-Dong Wang obtained a generalization of Sundman's solution for the $N \geq 3$-case~\cite{wang}. Closely related results where obtained by L. K. Babadzanjanz in~\cite{babadzanjanz} (see also~\cite{babadzanjanz2}).
As an intermediate step to obtain their globally convergent series expansion, they considered the solution of the $N$-body problem as a function of a new independent variable $\tau$ related to the physical time $t$ by
\begin{equation}
\label{eq:s(q)}
\frac{d\tau}{d t} = s(q(t))^{-1}, \quad \tau(0)=0,
\end{equation}
with some appropriate {\em time-renormalization function} $s(q)$ depending on the positions $q = (q_1,\ldots,q_N) \in \R^{3N}$. In Sundman's work (resp. in Wang's approach for the $N>3$ case), the solution is shown to be uniquely determined as a holomorphic function of the complex variable $\tau$ in a strip along the real axis (resp. in a less simple neighborhood of the real axis). Finally, a conformal mapping $\tau \mapsto \sigma$ is used to define the solution as a holomorphic function  in the unit disk, which immediately leads to the existence of a globally convergent expansion of the solution as a power series in $\sigma$.
Despite the theoretical interest of the power series expansions considered by Sundman and Wang, they are  useless as practical methods to solve the $N$-body equations due to the extremely slow convergence of the series expansions of the solutions for the values of $t$ corresponding to values of $\sigma$ near the boundary of the unit disk. As already invoked, highly accurate numerical integration schemes are required instead. 

In the presence of close encounters,   numerical integration has to be used either with an efficient strategy to adapt the step-size or in combination with some time-renormalization. In the later case, one aims at determining a real analytic function $s(q)$ that relates the physical time $t$  with a new independent variable $\tau$ by (\ref{eq:s(q)})
that allows one to use constant time-step (in $\tau$) without degrading the accuracy of the computed trajectories. 
Actually, the time-renormalizations considered by Sundman and  Wang respectively, for the 3-body problem and  the $N$-body problem respectively, can be useful for that purpose, although with some limitations.

In this paper, we introduce new time-renormalization functions that arise somewhat naturally by considering estimates of the domain of existence of the holomorphic extension of maximal solutions of the $N$-body problem
\begin{align} 
\label{eq:Nbodyq}
     \frac{d q_i}{d t} &= v_i,  & i=1,\ldots,N,\\
     \label{eq:Nbodyv}
  \frac{d v_i}{d t} &=   \sum_{\substack{j=1 \\ j\neq i}}^N \frac{G m_j}{\|q_i - q_j\|^3}(q_j-q_i), & i=1,\ldots,N,
\end{align}
 to the complex domain. 
Such time-renormalization functions depend on both positions $q = (q_1,\ldots,q_N) \in \R^{3N}$ and velocities $v = (v_1,\ldots,v_N) \in \R^{3N}$, and following Wang's terminology, define a  blow-up time-transformation. That is, the maximal  solution $(q(t),v(t))$ of (\ref{eq:Nbodyq})--(\ref{eq:Nbodyv}) 
 supplemented with initial conditions 
\begin{equation}
\label{eq:icond}
q(0) = q^0 \in \mathbb{R}^{3N} , \quad v(0) = v^0 \in \mathbb{R}^{3N}
\end{equation}
is transformed by the renormalization of time 
\begin{equation}
\label{eq:theta}
\tau=\theta(t) = \int_0^t s(q(t'),v(t'))^{-1}\, dt'
\end{equation}
into $(\hat q(\tau), \hat v(\tau))$ defined on $ \mathbb{R}$.

Our goal is to obtain  {\em uniform global time-renormalization} functions in the following sense.
\begin{definition} \label{def:timereg} Let $s(q,v)$ be a real-analytic function defined on $\mathbb{R}^{3N}\backslash\Delta \times \mathbb{R}^{3N}$. We will say that $s(q,v)$ determines a {\em uniform global time-renormalization} of (\ref{eq:Nbodyq})--(\ref{eq:Nbodyv}) if there exists $\beta>0$ such that, for  arbitrary $q^0 \in \mathbb{R}^{3N}\backslash\Delta$, $v^0 \in \mathbb{R}^{3N}$, the solution $(\hat q(\tau),\hat v(\tau))$ in $\tau$ (such that $(\hat q(\theta(t)),\hat v(\theta(t)) = (q(t), v(t))$) can be extended analytically to the strip 
$$
\{\tau \in \mathbb{C}\ : \ |\mathrm{Im}(\tau)| \leq \beta\}.
$$
\end{definition}
Our main contribution in this paper is to explicitly construct uniform global time-renormalization functions.

Notice that uniform global time-renormalizations are in particular blow-up time-transformations, but the converse does not hold. Specifically, the blow-up time transformation considered in~\cite{wang} does not satisfy the condition of Definition~\ref{def:timereg}.

{ It is worth mentioning that Sundman's time-renormalization differ from those considered  in~\cite{wang} and in the present work in that it allows, in combination  with a change of state variables, to extend the solutions as holomorphic functions of $\tau$ beyond binary collisions. This is however restricted to 3-body problems with non-zero angular momentum.}



In practice, if $s(q,v)$ is a uniform global time-renormalization function with parameter $\beta$, then one can accurately integrate the transformed equations with a high-order integrator with a constant step-size $\Delta \tau$ chosen as a moderate fraction of $\beta$.
Although standard adaptive step-size implementations may deal with the numerical integration of $N$-body problems involving close encounters, applying a suitable time-renormalization and numerically integrating the resulting equations with constant step-size is often more efficient. 
This is particularly so for long-time numerical integration: constant step-size geometric integrators~\cite{hairer06gni} applied to appropriately time-renormalized equations is indeed expected to outperform standard adaptive step-size integrators.  In addition, time-renormalization is particularly useful for approximating periodic solutions of $N$-body problems as truncated Fourier series. Indeed, the coefficients of the Fourier series expansion of periodic solutions $q(t)$ with close encounters decay (exponentially) at a very low rate, while the exponential decay of the Fourier coefficients of $(\hat q(\tau),\hat v(\tau))$ is bounded from below by the quotient of $\beta$ with the period in $\tau$. A similar situation occurs with the Chebyshev series expansion of solutions in a prescribed time interval $[0,T]$.  In this case, the decay of the Chebyshev coefficients  is determined by the size of the complex domain of analyticity (the size of the largest ellipse with foci in $0$ and $T$).

We conclude this introductory section with the outline of the article. \\

 In Section \ref{sect:main} we will state the main results: In Subsection~\ref{ss:extension}, we treat 
  the extension of the solutions of (\ref{eq:Nbodyq})--(\ref{eq:Nbodyv}) as
  holomorphic functions of the complex time;  a uniform global time-renormalization function (in the sense of Definition~\ref{def:timereg}) is exposed  in Subsection~\ref{ss:renormalization}, and a global power series representation of the solutions of (\ref{eq:Nbodyq})--(\ref{eq:Nbodyv}) (similar to those of Sundman and Wang) is presented as a by-product; the section is closed providing alternative time-renormalization functions in Subsection~\ref{ss:altfcn}. In Section \ref{sect:proofs} we give technical details of the proofs of the main results of Section~\ref{sect:main}.  Section \ref{sect:numer} is devoted to present some illustrative numerical experiments. Finally, some concluding remarks are presented in Section~\ref{sect:conclusions}.

\section{Main results} 
\label{sect:main}

In this section, we present the main contributions of the paper. 
First, a lower bound of the radius of convergence of the solution to the original
equations is derived (Theorem~\ref{th:L}). That result motivates our choice of  time-renormalization function, which is shown (see Theorem~\ref{th:s}) to be a uniform global time-renormalization function in the sense of Definition~\ref{def:timereg}.

For the sake of clarity,  the proofs of some results stated in the present section are postponed to Section~\ref{sect:proofs}. With this in mind, consider the solution $(q(t),v(t))$ of the $N$-body problem (\ref{eq:Nbodyq})--(\ref{eq:Nbodyv})
 supplemented with initial conditions (\ref{eq:icond}) such that $q^0\not \in \Delta$. Cauchy-Lipschitz theorem ensures the existence and uniqueness of a {\em maximal} solution  $q(t) \in \mathbb{R}^{3N}\backslash\Delta$, $t \in (t_a,t_b)$. 
\begin{theorem}[Painlev\'e~\cite{painleve}]
\label{th:Painleve}
If $t_a> -\infty$ (resp. $t_b < +\infty$), 
then $\min_{ij} \|q_i(t)-q_j(t)\|$ tends to 0 
as $t \uparrow t_a$ (resp. $t \downarrow t_b$). 
\end{theorem}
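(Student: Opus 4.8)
The plan is to argue by contradiction: suppose the maximal forward interval ends at a finite time $t_b < +\infty$, yet $\min_{ij}\|q_i(t)-q_j(t)\|$ does \emph{not} tend to $0$ as $t \uparrow t_b$. Then there is a $\delta > 0$ and a sequence $t_n \uparrow t_b$ with $\min_{ij}\|q_i(t_n)-q_j(t_n)\| \geq \delta$ for all $n$. The goal is to show that under this hypothesis the solution $(q(t),v(t))$ actually stays in a fixed compact subset of $(\mathbb{R}^{3N}\setminus\Delta)\times\mathbb{R}^{3N}$ as $t\uparrow t_b$, which by the standard escape-time (continuation) theorem for ODEs contradicts maximality of $(t_a,t_b)$. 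The case $t_a > -\infty$ is identical after reversing time, so I would only treat $t_b$.

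First I would control the energy. The total energy $E = \frac12\sum_i m_i\|v_i\|^2 - \sum_{i<j} \frac{G m_i m_j}{\|q_i-q_j\|}$ is conserved along the flow. Hence on $[0,t_b)$ the kinetic energy satisfies $\frac12\sum_i m_i\|v_i(t)\|^2 = E + \sum_{i<j}\frac{Gm_im_j}{\|q_i(t)-q_j(t)\|}$. The obstacle here is that the right-hand side is \emph{not} obviously bounded: a priori some mutual distance could become small along some other sequence of times even while $\min_{ij}$ along $t_n$ stays $\geq\delta$. So the heart of the argument is to upgrade the ``liminf $\geq \delta$'' information to a genuine uniform lower bound on all mutual distances on a whole left-neighborhood of $t_b$. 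I would do this with a continuity/bootstrap argument: the distances $r_{ij}(t)=\|q_i(t)-q_j(t)\|$ are continuous, and one shows that if $r_{ij}$ is bounded below by, say, $\delta/2$ on an interval, then $\|v(t)\|$ is bounded there (by the energy relation), hence the $q_i$ are uniformly Lipschitz there, hence each $r_{ij}$ cannot drop from $\geq\delta$ to $\leq\delta/2$ faster than a fixed rate; combining this with the existence of times $t_n\to t_b$ where all $r_{ij}(t_n)\geq\delta$ forces (for $t$ close enough to $t_b$) a uniform lower bound $r_{ij}(t)\geq c>0$. The quantitative version: if $\|v\|\le M$ on $[t_n, t_b)$ then $|r_{ij}(t)-r_{ij}(t_n)| \le 2M(t_b-t_n)$, and $M$ itself depends only on $E$ and the current lower bound on the $r_{ij}$'s; choosing $t_n$ with $t_b - t_n$ small enough closes the loop and gives $r_{ij}(t)\ge \delta/2$ for $t\in[t_n,t_b)$.

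Once a uniform lower bound $r_{ij}(t)\geq c>0$ on $[t_*,t_b)$ is established, the energy relation immediately gives $\|v(t)\|\leq M$ on $[t_*,t_b)$, so the positions satisfy $\|q(t)-q(t_*)\|\leq M(t_b-t_*)$ and remain in a bounded set; together with $r_{ij}\geq c$ this confines $(q(t),v(t))$ to a compact subset $K\subset(\mathbb{R}^{3N}\setminus\Delta)\times\mathbb{R}^{3N}$. The vector field of (\ref{eq:Nbodyq})--(\ref{eq:Nbodyv}) is smooth on a neighborhood of $K$, so by the continuation theorem the solution extends beyond $t_b$, contradicting maximality. I expect the quantitative Lipschitz/bootstrap step — turning the sequential lower bound along $t_n$ into a uniform one near $t_b$ — to be the only genuinely delicate point; everything else is conservation of energy plus the standard ODE escape lemma.
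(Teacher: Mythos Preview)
Your argument is correct and is essentially the classical proof: energy conservation plus a continuity/bootstrap step to upgrade the sequential lower bound $\min_{ij}r_{ij}(t_n)\ge\delta$ to a uniform one on a left-neighborhood of $t_b$, followed by the standard escape lemma. The bootstrap you sketch does close: on the maximal subinterval of $[t_n,t_b)$ where all $r_{ij}\ge\delta/2$, energy gives a fixed velocity bound $M$ (depending only on $E$, the masses, and $\delta$), hence $r_{ij}(t)\ge\delta-2M(t-t_n)$ there; choosing $n$ with $t_b-t_n<\delta/(4M)$ forces that subinterval to be all of $[t_n,t_b)$.

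The paper, however, does not prove Theorem~\ref{th:Painleve} this way. It is stated as a classical result with a citation, and the paper's own argument appears later in Remark~\ref{rem:rho} as a corollary of Theorem~\ref{th:L}: since the radius of convergence at $t^*$ is at least $L(q(t^*),v(t^*),\lambda)^{-1}$ and must shrink to $0$ as $t^*$ approaches a finite endpoint, one gets $L\to\infty$; from the explicit form of $L$ this forces either $\min_{ij}\|q_i-q_j\|\to 0$ or $\max_{ij}\|v_i-v_j\|\to\infty$, and the velocity estimate~(\ref{eq:Lvaux}) shows the second alternative implies the first. So the paper's route avoids energy conservation entirely and instead leverages the quantitative complex-analytic estimate that is the main new ingredient of the paper. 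One concrete dividend of that route over yours: it survives the limit $m_i\to 0$ for some $i$, whereas your kinetic-energy bound $\|v_i\|^2\le 2T(v)/m_i$ degenerates there. Conversely, your argument is entirely elementary and does not require building the machinery of Theorem~\ref{th:L}.
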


Given that (\ref{eq:Nbodyv}) is real analytic, it is well known that the maximal solution $q(t)$ is a real analytic function of $t$. 
For each $t^* \in (t_a,t_b)$,  the Taylor expansion of $q(t)$ at  $t=t^*$ 
$$
q(t) = q(t^*) + (t-t^*) v(t^*)  + \sum_{k \geq 2} \frac{(t-t^*)^k}{k!} q^{(k)}(t^*)
$$
is converging for all complex times $t$ such that $|t-t^*| < \rho(q^*,v^*)$,  where the {\em radius of convergence} $\rho(q^*,v^*)$ is uniquely determined by  $q(t^*)=q^* \in \R^{3N}$ and  $v(t^*)=v^* \in \R^{3N}$. In particular, $q(t)$ can be analytically extended to the complex neighbourhood of the real interval $(t_a,t_b)$
\begin{equation}
\label{eq:W}
\mathcal{W} = \left\{ t \in \mathbb{C}\ : \ |\mathrm{Im}(t)| < \rho(q(t^*),v(t^*)), \  t^* = \mathrm{Re}(t) \in (t_a,t_b) \right\}.
\end{equation}

\begin{remark}
\label{rem:W}
Provided that $t_a>-\infty$ (resp. $t_b<+\infty$),  $\rho(q(t^*),v(t^*)) < t^*-t_a$ (resp. $\rho(q(t^*),v(t^*))< t_b-t^*$), and hence $\rho(q(t^*),v(t^*))$ shrinks to 0 as $t^*  \downarrow t_a$ (resp. $t^* \uparrow  t_b$). If a close encounter occurs around $t^* \in (t_a,t_b)$, then  $\mathcal{W}$ becomes very narrow at $t^*$. 
\end{remark}


\subsection{Holomorphic extension of the solution of the $N$-body problem}
\label{ss:extension}

In the present subsection, we derive lower bounds of $\rho(q,v)$ for arbitrary $q \in \R^{3N}\backslash \Delta$ and $v \in \R^{3N}$ which define an explicit complex neighbourhood  of $(t_a,t_b)$ (included in (\ref{eq:W}))  where the solution $q(t)$ can be analytically extended. By means of Cauchy estimates, bounds of high-order derivatives of the components of $q(t)$ can be obtained straightforwardly.

We first rewrite (\ref{eq:Nbodyv}) as $dv_i/dt = g_i(q)$, $i=1,\ldots,N$, where
\begin{equation}
\label{eq:gi}
g_i(q) =   \sum_{\substack{j=1 \\ j\neq i}}^N \frac{G m_j}{((q_i - q_j)^T(q_i - q_j))^{3/2}}(q_j-q_i).
\end{equation}
Function $g_i$ is now holomorphic in the open domain of $q \in \mathbb{C}^{3N}$ obtained by excluding its singularities, i.e. values of $q \in \mathbb{C}^{3N}$ such that 
\begin{equation*}
(q_i - q_j)^T(q_i - q_j)=0
\end{equation*}
for some $(i,j)$, $1 \leq i <  j \leq N$. Note that here, denoting $q_i-q_j = (x_{ij},y_{ij},z_{ij})$, 
$(q_i - q_j)^T(q_i - q_j) = x_{ij}^2+y_{ij}^2+z_{ij}^2=0$ does not imply $q_i = q_j$.
In the sequel, the following lemma will prove of much use.
\begin{lemma}
\label{lem:tech}
Let $u$ and $U$ be two vectors of $\C^3$. The following two estimates hold true
\begin{align*}
(i) \quad & |U^T U- u^Tu|   \leq 2 \, \|u\| \, \|U-u\| + \|U-u\|^2, \\
(ii) \quad & |U^T U| \geq  |u^Tu| - 2 \, \|u\|\, \|U-u\| - \|U-u\|^2.
\end{align*}
\end{lemma}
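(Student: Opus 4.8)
The lemma is a pair of elementary estimates about the "complexified squared norm" $U^TU = \sum_{k} U_k^2$ for vectors in $\C^3$, and the natural approach is to expand around $u$ by writing $U = u + h$ with $h = U - u \in \C^3$. Then $U^TU = (u+h)^T(u+h) = u^Tu + 2\,u^Th + h^Th$, so that
\[
U^TU - u^Tu = 2\, u^T h + h^T h.
\]
Everything then reduces to bounding the two terms $|2\,u^Th|$ and $|h^Th|$ in terms of $\|u\|$ and $\|h\| = \|U-u\|$.

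The key point is that for complex vectors $a,b \in \C^3$ one has $|a^Tb| \le \|a\|\,\|b\|$, where $\|\cdot\|$ denotes the Hermitian (Euclidean) norm $\|a\|^2 = \sum_k |a_k|^2 = a^*a$. This is just the Cauchy--Schwarz inequality applied to the vectors $a$ and $\bar b$ (the complex conjugate), since $a^Tb = \sum_k a_k b_k = \sum_k a_k \overline{\bar b_k} = \langle a, \bar b\rangle$ and $\|\bar b\| = \|b\|$. Applying this with $a = u$, $b = h$ gives $|2\,u^Th| \le 2\|u\|\,\|h\|$, and with $a = b = h$ gives $|h^Th| \le \|h\|^2$. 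Combining via the triangle inequality in $\C$,
\[
|U^TU - u^Tu| \le |2\,u^Th| + |h^Th| \le 2\,\|u\|\,\|U-u\| + \|U-u\|^2,
\]
which is exactly $(i)$. For $(ii)$, the reverse triangle inequality gives $|U^TU| \ge |u^Tu| - |U^TU - u^Tu|$, and substituting the bound from $(i)$ yields
\[
|U^TU| \ge |u^Tu| - 2\,\|u\|\,\|U-u\| - \|U-u\|^2,
\]
which is $(ii)$.

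There is essentially no obstacle here: the only subtlety worth flagging is to be careful that $U^TU$ (the bilinear form, no conjugation) is distinct from $\|U\|^2 = U^*U$ (the Hermitian form), and that Cauchy--Schwarz must be invoked in the form $|a^Tb|\le\|a\|\,\|b\|$ via the conjugation trick, since $a^Tb$ is not an inner product. Once that is set up, both estimates follow in two or three lines. I would present the proof by introducing $h = U-u$, writing out the expansion, stating the $|a^Tb| \le \|a\|\|b\|$ inequality with its one-line justification, and then chaining the triangle and reverse-triangle inequalities as above.
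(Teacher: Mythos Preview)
Your proof is correct and follows essentially the same route as the paper: write $U^TU - u^Tu = (U-u)^T((U-u)+2u)$ (equivalently your expansion with $h=U-u$), bound it via Cauchy--Schwarz to get $(i)$, and obtain $(ii)$ from $(i)$ by the reverse triangle inequality. Your explicit remark that $|a^Tb|\le\|a\|\,\|b\|$ for complex vectors requires the conjugation trick (since $a^Tb$ is bilinear, not sesquilinear) is a nice clarification that the paper leaves implicit.
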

\begin{proof}[Proof of Lemma~\ref{lem:tech}]
In order to prove $(i)$, we write $U^T U-u^T u$ as $(U-u)^T((U-u)+2u)$ and use Cauchy-Schwartz inequality to get 
$$
|(U-u)^T((U-u)+2u)| \leq \|U-u\| \, (\|U-u\| + 2 \|u\|).
$$
Point $(ii)$ follows from $|u^Tu|-|U^TU| \leq |U^TU-u^Tu|$.
\end{proof} \\ \\
Now,  given  $q^0 \in \mathbb{R}^{3N}\backslash\Delta$, consider $q \in \mathbb{C}^{3N}$ such that 
\begin{equation}
\label{eq:qcond}
 \| q_i-q_j-q^0_i+q^0_j\| < (\sqrt{2}-1) \, \|q^0_i-q^0_j\|, \ 1 \leq i < j \leq N.
\end{equation}
Lemma~\ref{lem:tech} then implies that
$(q_i - q_j)^T(q_i - q_j)>0$ for all $i,j$, so that (\ref{eq:gi})  is holomorphic
in the neighborhood of $q^0$
\begin{equation}
\label{eq:Ulambda}
\mathcal{U}_{\lambda}(q^0) = \{q \in \mathbb{C}^{3N}\ : \  \| q_i-q_j-q^0_i+q^0_j\| \leq \lambda\, \|q^0_i-q^0_j\|, \ 1 \leq i < j \leq N
\}
\end{equation}
for $\lambda = \sqrt{2}-1$.  A further application of Lemma~\ref{lem:tech} allows to bound  (\ref{eq:gi}) as in next proposition, whose proof is straightforward and thus omitted. 
\begin{proposition}
\label{prop:boundgi}
Let $\lambda \in (0,\sqrt{2}-1)$ and $q^0 \in \mathbb{R}^{3N}\backslash\Delta$. For all $q \in \mathcal{U}_{\lambda}(q^0)$, one has 
$$
\left\| g_i(q) \right\| \leq 
\eta(\lambda) \, K_i(q),
$$
where 
\begin{align}
\label{eq:Ki}
K_i(q) = \sum_{\substack{j=1 \\ j\neq i}}^N  \frac{G\, m_j}{||q_i - q_j||^2}, \qquad 
\eta(\lambda) = \frac{1+\lambda}{(1 - 2 \lambda - \lambda^2)^{3/2}}.
\end{align}
\end{proposition}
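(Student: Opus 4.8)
The plan is to bound $\|g_i(q)\|$ termwise and reduce the proposition to a single scalar inequality for each pair $(i,j)$. Writing
$g_i(q) = \sum_{j\neq i} Gm_j\,((q_i-q_j)^T(q_i-q_j))^{-3/2}(q_j-q_i)$
and applying the triangle inequality for the norm $\|\cdot\|$ on $\mathbb{C}^3$, one gets
$$
\|g_i(q)\| \le \sum_{j\neq i} \frac{Gm_j\,\|q_i-q_j\|}{|(q_i-q_j)^T(q_i-q_j)|^{3/2}}.
$$
Comparing this with $\eta(\lambda)\,K_i(q) = \eta(\lambda)\sum_{j\neq i} Gm_j\,\|q_i-q_j\|^{-2}$, it suffices to prove, for each pair, the pointwise estimate
$$
\frac{\|q_i-q_j\|^3}{|(q_i-q_j)^T(q_i-q_j)|^{3/2}}
= \left(\frac{\|q_i-q_j\|^2}{|(q_i-q_j)^T(q_i-q_j)|}\right)^{3/2} \le \eta(\lambda),
$$
so that the factors $Gm_j$ and the denominators $\|q_i-q_j\|^{-2}$ of $K_i(q)$ pass through the sum termwise. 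Everything thus reduces to controlling the single ratio $\|U\|^2/|U^TU|$ with $U = q_i-q_j$.

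For a fixed pair, set $u = q^0_i-q^0_j \in \mathbb{R}^3$ and $U = q_i-q_j \in \mathbb{C}^3$; membership $q\in\mathcal U_\lambda(q^0)$ is exactly $\|U-u\|\le \lambda\|u\|$. The two ingredients are then (a) the upper bound $\|U\| \le \|u\| + \|U-u\| \le (1+\lambda)\|u\|$ from the triangle inequality, and (b) the lower bound $|U^TU| \ge |u^Tu| - 2\|u\|\,\|U-u\| - \|U-u\|^2 \ge (1-2\lambda-\lambda^2)\|u\|^2$ obtained by applying Lemma~\ref{lem:tech}(ii) together with $u^Tu = \|u\|^2$ (real $u$). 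The restriction $\lambda < \sqrt2-1$ is exactly what makes $1-2\lambda-\lambda^2>0$, so the denominator stays bounded away from $0$ throughout $\mathcal U_\lambda(q^0)$; feeding (a)--(b) into the ratio, the powers of $\|u\|$ cancel and one is left with a constant depending only on $\lambda$.

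The step I expect to be delicate is matching the \emph{exact} constant $\eta(\lambda)=(1+\lambda)(1-2\lambda-\lambda^2)^{-3/2}$, i.e.\ ensuring the numerator carries $(1+\lambda)$ only to the first power. The cleanest route keeps the denominator of $K_i(q)$ as the holomorphic form $|(q_i-q_j)^T(q_i-q_j)|$: then the per-pair target becomes $\|U\|/|U^TU|^{1/2}\le \eta(\lambda)$, which follows at once from (a)--(b) since $(1+\lambda)/(1-2\lambda-\lambda^2)^{1/2}\le (1+\lambda)/(1-2\lambda-\lambda^2)^{3/2}=\eta(\lambda)$. If instead $\|q_i-q_j\|^2$ is read as the Hermitian square $\|U\|^2$, then naively combining (a) and (b) produces the weaker constant $(1+\lambda)^3(1-2\lambda-\lambda^2)^{-3/2}$, because the two estimates cannot both be sharp at the same $U$; recovering the stated $\eta(\lambda)$ then requires a correlated estimate, e.g.\ writing $U=x+iy$ with $x,y\in\mathbb R^3$, using $\|U\|^2=\|x\|^2+\|y\|^2$ and $|U^TU|\ge \|x\|^2-\|y\|^2$ together with $\|x\|\ge(1-\lambda)\|u\|$ and $\|y\|\le\lambda\|u\|$ to get $\|U\|^2/|U^TU|\le (1-2\lambda+2\lambda^2)/(1-2\lambda)$, and finally verifying the scalar inequality $\big((1-2\lambda+2\lambda^2)/(1-2\lambda)\big)^{3/2}\le \eta(\lambda)$ on $(0,\sqrt2-1)$. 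Either way the heart of the argument is the single-pair ratio bound, and the obstacle is purely the bookkeeping of this constant.
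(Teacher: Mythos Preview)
The paper does not actually give a proof, saying only that it is a straightforward further application of Lemma~\ref{lem:tech}. Your termwise argument via (a) $\|U\|\le(1+\lambda)\|u\|$ and (b) $|U^TU|\ge(1-2\lambda-\lambda^2)\|u\|^2$ from Lemma~\ref{lem:tech}(ii) is exactly that intended application, so your approach and the paper's agree.

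The apparent mismatch with the constant $\eta(\lambda)$ that leads you into the two ``interpretations'' and the $x+iy$ detour is caused by a typo in the statement: the right-hand side is meant to be $K_i(q^0)$, not $K_i(q)$. This is how the bound is actually used everywhere else in the paper---see the proof of Theorem~\ref{th:L} (``in order to bound $g_i$ by $\eta(\lambda)K_i(q^0)$'') and the remark on Cauchy estimates immediately following it. With $K_i(q^0)$ on the right, the per-pair target becomes $\|U\|\,|U^TU|^{-3/2}\le\eta(\lambda)\,\|u\|^{-2}$, and (a)--(b) give
\[
\frac{\|U\|}{|U^TU|^{3/2}}\;\le\;\frac{(1+\lambda)\,\|u\|}{(1-2\lambda-\lambda^2)^{3/2}\,\|u\|^{3}}\;=\;\frac{\eta(\lambda)}{\|u\|^{2}}
\]
on the nose, with no slack and no auxiliary scalar inequality left to check.
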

We are now in position to state the following ``existence and uniqueness" theorem.  
\begin{theorem}
\label{th:L}
Under the assumptions of Proposition~\ref{prop:boundgi}, the solution  $(q(t),v(t))$ of (\ref{eq:Nbodyq})--(\ref{eq:icond}) is uniquely determined as a holomorphic function of  $t$ in the closed disk 
\begin{equation*}
D_{\lambda}(q^0,v^0) = \left\{ t \in \mathbb{C}\ : \ |t| \leq L(q^0,v^0,\lambda)^{-1} \right\},
\end{equation*}
where $\displaystyle L(q,v,\lambda) = \max_{1\leq i < j \leq N} L_{ij}(q,v,\lambda)$, with
\begin{equation*}
L_{ij}(q,v,\lambda) = 
\frac{||v_i-v_j||}{2\,\lambda \,\|q_i-q_j\|} +  
\sqrt{\left( \frac{||v_i-v_j||}{2\, \lambda\, \|q_i-q_j\|}\right)^2+
\frac{M_{ij}(q,\lambda)}{2\, \lambda\, \|q_i-q_j\|}},
\end{equation*}
and
\begin{equation}
\label{eq:Mij}
M_{ij}(q,\lambda) =  \eta(\lambda)\, (K_i(q) + K_j(q)).
\end{equation}
Furthermore,  for all $t \in D_{\lambda}(q^0,v^0)$,
\begin{align}
\label{eq:Lqaux}
 \| q_i(t)-q_j(t)-q^0_i+q^0_j\| &\leq |t| \, \|v_i^0-v_j^0\| + \frac{|t|^2}{2}\, M_{ij}(q^0,\lambda), \\
 \label{eq:Lvaux}
 \|v_i(t)-v_j(t)-v_i^0+v_j^0\| & \leq M_{ij}(q^0,\lambda) \, |t|,
\end{align}
which in particular implies that $q(t) \in \mathcal{U}_{\lambda}(q^0)$.
\end{theorem}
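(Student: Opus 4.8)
The plan is to run a bootstrap (continuation) argument in complex time. Starting from the fact, recalled just before the statement, that $q^0\notin\Delta$ makes the maximal solution holomorphic on a complex neighbourhood of $t=0$, I would show that the trajectory stays trapped in the region $\mathcal{U}_\lambda(q^0)$, on which --- by Proposition~\ref{prop:boundgi} combined with Lemma~\ref{lem:tech}(ii), which bounds the denominators below by $\|q_i^0-q_j^0\|$ --- the $g_i$ are holomorphic and obey the \emph{constant} bound $\|g_i(q)\|\le\eta(\lambda)K_i(q^0)$, so that $\|g_i(q)-g_j(q)\|\le M_{ij}(q^0,\lambda)$; and that this trapping persists all the way out to $|t|=L(q^0,v^0,\lambda)^{-1}$. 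Since $q^0\notin\Delta$ forces $\|q_i^0-q_j^0\|>0$ and $K_i(q^0)<\infty$, every $L_{ij}$ is finite and positive, so $D_\lambda(q^0,v^0)$ is a genuine neighbourhood of $0$.

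I would first establish the a priori estimates under the trapping hypothesis. On a disk of some radius $r>0$ on which $q$ is holomorphic with $q(t)\in\mathcal{U}_\lambda(q^0)$, integrating the difference of (\ref{eq:Nbodyv}) for indices $i$ and $j$ along the segment from $0$ to $t$ and using the bound above gives
\[
\|v_i(t)-v_j(t)-v_i^0+v_j^0\|=\left\|\int_{[0,t]}\bigl(g_i(q(s))-g_j(q(s))\bigr)\,ds\right\|\le M_{ij}(q^0,\lambda)\,|t|,
\]
which is (\ref{eq:Lvaux}); feeding this into the identity $q_i(t)-q_j(t)-q_i^0+q_j^0=t\,(v_i^0-v_j^0)+\int_{[0,t]}\bigl(v_i(s)-v_j(s)-v_i^0+v_j^0\bigr)\,ds$ and using $\int_{[0,t]}|s|\,|ds|=|t|^2/2$ yields (\ref{eq:Lqaux}).

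Then I would close the loop. For fixed $i<j$ the right-hand side of (\ref{eq:Lqaux}), as a function of $\rho=|t|\ge0$, is strictly increasing, and the whole point of the definition of $L_{ij}$ is the algebraic identity that this function equals $\lambda\,\|q_i^0-q_j^0\|$ exactly at $\rho=L_{ij}(q^0,v^0,\lambda)^{-1}$: indeed $L_{ij}^{-1}$ is the positive root of $\frac{1}{2}M_{ij}(q^0,\lambda)\rho^2+\|v_i^0-v_j^0\|\rho-\lambda\|q_i^0-q_j^0\|=0$, and rationalising the quadratic formula reproduces the stated expression for $L_{ij}$. Since $L^{-1}=\min_{i<j}L_{ij}^{-1}$, for every $|t|<L^{-1}$ the bound (\ref{eq:Lqaux}) is \emph{strict}, so the trajectory stays in the interior of $\mathcal{U}_\lambda(q^0)$, while $v$ stays bounded by (\ref{eq:Lvaux}). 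A standard open--closed argument then shows that $q$ extends holomorphically, with values in $\mathcal{U}_\lambda(q^0)$, on a disk of radius strictly larger than $L^{-1}$: because $\mathcal{U}_\lambda(q^0)$ with $\lambda<\sqrt{2}-1$ is a \emph{closed} set contained in the \emph{open} holomorphy domain of the $g_i$ (cf.\ (\ref{eq:qcond})), whenever $q$ is holomorphic with values in $\mathcal{U}_\lambda(q^0)$ on a disk of radius $r\le L^{-1}$ it extends past the bounding circle, and by continuity of the strict bound (\ref{eq:Lqaux}) it still takes values in $\mathcal{U}_\lambda(q^0)$ there. This yields holomorphy on a neighbourhood of $D_\lambda(q^0,v^0)$, the estimates (\ref{eq:Lqaux})--(\ref{eq:Lvaux}) on $D_\lambda(q^0,v^0)$, and $q(t)\in\mathcal{U}_\lambda(q^0)$; uniqueness is immediate from the identity theorem, any holomorphic solution on the disk agreeing near $0$ with the real maximal solution.

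The step I expect to be the main obstacle is making this trapping rigorous --- i.e.\ ruling out that $q(t)$ leaves $\mathcal{U}_\lambda(q^0)$, meets $\Delta$ or a complex singularity of some $g_i$, or develops a pole, before $|t|$ reaches $L^{-1}$. It is exactly the combination of the strict inequality in (\ref{eq:Lqaux}) for $|t|<L^{-1}$ with the closedness of $\mathcal{U}_\lambda(q^0)$ inside the open domain of holomorphy that rules this out. A technically smoother alternative, avoiding any appeal to a ``maximal holomorphic extension'', is to run the Picard iteration $q_i^{[n+1]}(t)=q_i^0+\int_0^t v_i^{[n]}(s)\,ds$, $v_i^{[n+1]}(t)=v_i^0+\int_0^t g_i(q^{[n]}(s))\,ds$ directly on $D_\lambda(q^0,v^0)$: each iterate is a polynomial in $t$ (hence entire), the estimates above show inductively that $q^{[n]}(t)\in\mathcal{U}_\lambda(q^0)$ for all $n$, and a Lipschitz bound for the $g_i$ on the compact set $\mathcal{U}_\lambda(q^0)$ (again from Lemma~\ref{lem:tech}) gives uniform convergence on the closed disk to a limit that is continuous there, holomorphic in the interior, solves (\ref{eq:Nbodyq})--(\ref{eq:icond}), and satisfies (\ref{eq:Lqaux})--(\ref{eq:Lvaux}).
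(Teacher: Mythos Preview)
Your proposal is correct. The Picard-iteration alternative you sketch in the final paragraph is precisely the route the paper takes: it sets $q_i^{[0]}(t)=q_i^0+t\,v_i^0$, $q_i^{[n]}(t)=q_i^0+t\,v_i^0+\int_0^t\!\int_0^\sigma g_i(q^{[n-1]}(r))\,dr\,d\sigma$, proves by induction that each iterate is holomorphic on $D_\lambda(q^0,v^0)$ with values in $\mathcal{U}_\lambda(q^0)$ --- using exactly the quadratic-root characterisation of $L_{ij}^{-1}$ that you spell out --- and then closes with a Lipschitz bound for the $g_i$ on $\mathcal{U}_\lambda(q^0)$ to get uniform convergence on the closed disk. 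Your leading continuation/bootstrap argument is a genuine alternative that works directly with the holomorphic extension of the solution rather than with iterates; it gives a cleaner narrative at the price of the open--closed step you correctly flag as the delicate point, whereas Picard iteration trades that subtlety for a routine convergence estimate. One small slip in your last paragraph: the Picard iterates are \emph{not} polynomials in $t$ --- already $q^{[1]}$ involves a double integral of $g_i(q^0+s\,v^0)$, which contains square roots --- but this is harmless, since holomorphy on $D_\lambda(q^0,v^0)$ is all that is needed and follows inductively from $q^{[n-1]}(t)\in\mathcal{U}_\lambda(q^0)$.
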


\begin{remark}
\label{rem:scale_invariance}
It is well known that, if $(q(t),v(t))$ is a solution of (\ref{eq:Nbodyq})--(\ref{eq:Nbodyv}) and $\nu >0$, then $(Q(t),V(t)) = (\nu^{-2/3}q(\nu\, t), \nu^{1/3}v(\nu \, t))$ is also a solution of  (\ref{eq:Nbodyq})--(\ref{eq:Nbodyv}). This implies that $\rho(\nu^{-2/3} q, \nu^{1/3} v) \equiv \nu^{-1} \, \rho(q,v)$. The lower estimate of the the radius of convergence $\rho(q,v)$ given by Theorem~\ref{th:L} is consistent with that scale invariance property, that is, $L(\nu^{-2/3} q, \nu^{1/3} v,\lambda) \equiv \nu \, L(q,v,\lambda)$. 
  \end{remark}

\begin{remark}
\label{rem:rho}
Theorem~\ref{th:L} provides, for each $\lambda \in (0,\sqrt{2}-1)$, a lower bound $1/L(q^*,v^*,\lambda)$ of the radius of convergence $\rho(q^*,v^*)$ at $t=t^*$ of any solution $q(t)$ of   (\ref{eq:Nbody2}). This means that the maximal real solution $q(t)$ of (\ref{eq:Nbodyq})--(\ref{eq:icond}) 
defined for $t \in (t_a,t_b)$  can be uniquely extended as a holomorphic function to the closed complex neighbourhood  
\begin{equation*}
\left\{ t  \in \mathbb{C}\ : \ |\mathrm{Im}(t)| \leq L(q(t^*),v(t^*),\lambda)^{-1}, \  t^* = \mathrm{Re}(t) \in (t_a,t_b) \right\}
\end{equation*}
of $(t_a,t_b)$, which is included in (\ref{eq:W}). In fact, Theorem~\ref{th:Painleve} can be seen as a corollary of Theorem~\ref{th:L}. As a matter of fact, according to  Remark~\ref{rem:W}, 
if $t_a >-\infty$, then $L(q(t),v(t),\lambda) \to \infty$ as $t  \downarrow t_a$. This implies that either $\min_{i,j} \|q_i(t) - q_j(t)\| \to 0$ or $\max_{i,j} \|v_i(t) - v_j(t)\| \to \infty$ as $t  \downarrow t_a$, but by virtue of (\ref{eq:Lvaux}), the later implies the former.  A similar argument holds when $t_b<+\infty$.
\end{remark}

\begin{remark}
{
For close enough encounters of, say  bodies $1$ and $2$,  we will have that $L(q,v,\lambda)=L_{1,2}(q,v,\lambda)$ and  the term $K_1(q)+K_2(q)$ in $L_{1,2}(q,v,\lambda)$ will be dominated by  $\frac{G\, (m_1 + m_2)}{\|q_1-q_2\|^2}$, and thus
\begin{equation}
\label{eq:L2body}
L(q,v,\lambda) \approx \frac{w}{2\,\lambda \,r} +  
\sqrt{\left( \frac{w}{2\, \lambda\, r}\right)^2+
\frac{\eta(\lambda) \, \mu}{2\, \lambda\, r^3}},
\end{equation}
where $w = ||v_1-v_2||$,  $r=\|q_1-q_2\|$,  and $\mu = G\, (m_1+m_2)$. 
As for the radius of convergence $\rho(q,v)$ of the $N$-body problem, it is the distance from $t=0$ to the nearest (possibly complex) time $t=t^*$ required for the two bodies to collide. For close enough encounters, such $t^*$ can be approximated by the collision times of the two-body problem obtained by ignoring the rest of the bodies.}

{It may be illustrative to compare the radius of convergence $\rho(q,v)$ and its estimate $L(q,v,\lambda)^{-1}$ for two extreme cases of collinear trajectories of the two-body problem: 
 \begin{itemize}
\item The case where the kinetic energy is much larger than the potential energy. In that case, 
the radius of convergence $\rho(q,v)$ is precisely the time needed for the two bodies to collide, which can be approximated by the inertial collision time $r/w$. On the other hand,
$L(q,v,\lambda)^{-1} \approx \lambda\, \frac{r}{w}$.
\item The free fall case, where $w=0$, the radius of convergence is again the time needed for the two bodies to collide, which can be seen to be $\rho(q,v) = \frac{\pi}{2}\, \sqrt{\frac{r^3}{2\mu}}$, while  $L(q,v,\lambda)^{-1} =   
\frac{2\, \lambda}{\sqrt{\lambda\, \eta(\lambda)}} \, \sqrt{\frac{r^3}{2 \, \mu}}$.  
\end{itemize}
}
\end{remark}

\begin{remark}
By virtue of Proposition~\ref{prop:boundgi}, the second order derivative of the solution $q(t)=(q_1(t),\ldots,q_N(t))$ of (\ref{eq:Nbodyq})--(\ref{eq:icond}) for $t \in D_{\lambda}(q^0,v^0)$ can be bounded as 
\begin{equation*}
\|q_i^{(2)}(t)\| = \|g_i(q(t)\| \leq \eta(\lambda)\, K_i(q^0), 
\end{equation*}
and Cauchy estimates give immediately 
\begin{equation*}
\|q_i^{(n)}(0)\| \leq \eta(\lambda)\, (n-2)! \, K_i(q^0) \, L(q^0,v^0,\lambda)^{n-2}, \quad n \geq 3.
\end{equation*}
\end{remark}

\begin{remark}
\label{rem:alpha}
Theorem~\ref{th:L} can be extended in a straightforward way to the more general case of $N$-body problems where  each pair of particles is  attracted with a force of magnitude $G m_i m_j \|q_i-q_j\|^{-\alpha-1}$, with $\alpha>0$. It is indeed enough to generalize  (\ref{eq:Ki}) in Proposition \ref{prop:boundgi}  as
\begin{equation}
\label{eq:Kalpha}
K_i(q) = \sum_{\substack{j=1 \\ j\neq i}}^N  \frac{G\, m_j}{||q_i - q_j||^{\alpha+1}}, \qquad 
\eta(\lambda) = \frac{1+\lambda}{(1 - 2 \lambda - \lambda^2)^{(\alpha+2)/2}}.
\end{equation}
\end{remark}

\subsection{Uniform  global time-renormalization  of $N$-body problems}
\label{ss:renormalization}
Our aim in this subsection is to determine an appropriate positive real analytic function $s(q,v)$ that relates the physical time $t$ in (\ref{eq:Nbodyq})--(\ref{eq:Nbodyv}) to a fictitious time-variable $\tau$ by 
\begin{equation}
\label{eq:s(q,v)}
\frac{d\tau}{d t} = s(q(t),v(t))^{-1}, \quad \tau(0)=0,
\end{equation}
such that equations (\ref{eq:Nbodyq})--(\ref{eq:Nbodyv}) expressed in terms of $\tau$ read
\begin{equation}
\label{eq:Nbodytau}
\begin{array}{lll}
 \displaystyle    \frac{d \hat q_i}{d \tau} &= s(\hat q,\hat v)\, \hat v_i, & i=1, \ldots, N, \\
     && \\
 \displaystyle \frac{d \hat v_i}{d \tau} &= \displaystyle s(\hat q,\hat v)\,  \sum_{j\neq i} \frac{G\, m_j}{||\hat q_i - \hat q_j||^3}(\hat q_j-\hat q_i),& i=1, \ldots, N.
\end{array}
\end{equation}
As is well-known, solutions $(q(t),v(t))$ of (\ref{eq:Nbodyq})--(\ref{eq:icond}) and $(\hat q(\tau),\hat v(\tau))$ of (\ref{eq:Nbodytau}) with initial condition
\begin{equation}
\label{eq:icondtau}
\hat q(0) = q^0, \quad \hat v(0)=v^0, 
\end{equation}
are related by $q(t) = \hat q(\theta(t)), \, v(t) = \hat v(\theta(t))$, where $\theta:(t_a,t_b) \to \R$ is given by (\ref{eq:theta}).
\\ \\
Before stating next proposition, recall from Theorem~\ref{th:L} that the solution $(q(t),v(t))$ of (\ref{eq:Nbodyq})--(\ref{eq:icond})  is such that  $(q(t),v(t)) \in \mathcal{U}_{\lambda}(q^0) \times  \mathcal{V}_{\lambda}(q^0,v^0)$ for all $t \in D_{\lambda}(q^0,v^0)$, where
\begin{align}
\label{eq:Vlambda}
\mathcal{V}_{\lambda}(q^0,v^0) &= 
\left\{ v\in \mathbb{C}^{6N}\ : 
 \|v_i-v_j-v_i^0+v_j^0\| \leq M_{ij}(q^0,\lambda)/L(q^0,v^0,\lambda)\right\}.
\end{align}

\begin{proposition}
\label{prop:thsaux}
Let $s(q,v)$ be a positive real analytic function in  $(\mathbb{R}^{3N}\backslash\Delta) \times \mathbb{R}^{3N}$.
Given $q^0 \in \mathbb{R}^{3N}\backslash\Delta$, $v^0 \in  \mathbb{R}^{3N}$, assume that there exists $\lambda \in (0,\sqrt{2}-1)$ and $\delta \in (0,1)$ such that $s(q,v)$ can be holomorphically extended to $\mathcal{U}_{\lambda}(q^0) \times  \mathcal{V}_{\lambda}(q^0,v^0)$, and such that 
\begin{equation}
\label{eq:delta}
\forall (q,v) \in \mathcal{U}_{\lambda}(q^0) \times  \mathcal{V}_{\lambda}(q^0,v^0), \quad 
| s(q,v)^{-1} -s(q^0,v^0)^{-1} | \leq \delta \, s(q^0, v^0)^{-1}.
\end{equation}
Then,  the solution $(\hat q(\tau),\hat v(\tau))$ of (\ref{eq:Nbodytau})--(\ref{eq:icondtau}) is uniquely defined as a holomorphic function of $\tau$ in the disk
\begin{equation}
\label{eq:taudisk}
\{ \tau \in \mathbb{C}\ :  \  |\tau| \leq (1 -\delta)\, s(q^0,v^0)^{-1}\,  L(q^0,v^0,\lambda)^{-1}\}.
\end{equation}
\end{proposition}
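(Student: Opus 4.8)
The plan is to realize $(\hat q(\tau),\hat v(\tau))$ as the composition of the solution $(q(t),v(t))$ of (\ref{eq:Nbodyq})--(\ref{eq:Nbodyv})--(\ref{eq:icond}), which is already completely controlled by Theorem~\ref{th:L}, with the inverse of the time-change $\theta$ of (\ref{eq:theta}); the work then reduces to showing that $\theta$ is a biholomorphism from a subdomain of $D_{\lambda}(q^0,v^0)$ onto the target disk in (\ref{eq:taudisk}). First I would record that, by Theorem~\ref{th:L}, $(q(t),v(t))$ is holomorphic on $D_{\lambda}(q^0,v^0)$ with $(q(t),v(t))\in\mathcal{U}_{\lambda}(q^0)\times\mathcal{V}_{\lambda}(q^0,v^0)$ there; since $s$ is by hypothesis holomorphic and strictly positive on this polydisc, $t\mapsto s(q(t),v(t))^{-1}$ is holomorphic on $D_{\lambda}(q^0,v^0)$, and as the disk is convex, $\theta(t)=\int_0^t s(q(t'),v(t'))^{-1}\,dt'$ (integration along the segment $[0,t]$) is a well-defined holomorphic function there with $\theta(0)=0$ and $\theta'(t)=s(q(t),v(t))^{-1}$.

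The heart of the argument is to exploit (\ref{eq:delta}): writing $c:=s(q^0,v^0)^{-1}>0$ and $R:=L(q^0,v^0,\lambda)^{-1}$, it gives $|\theta'(t)-c|\le \delta\,c$ for all $t\in D_{\lambda}(q^0,v^0)$, so $\theta$ is a contractive perturbation of the linear map $t\mapsto c\,t$. Concretely, writing $\theta(t)=c\,t+c\,\phi(t)$ with $\phi(t)=c^{-1}\int_0^t(\theta'(t')-c)\,dt'$, one has $|\phi'|\le\delta<1$ and $|\phi(t)|\le\delta|t|$ on $D_{\lambda}(q^0,v^0)$. Integrating $\phi'$ along segments shows that $\mathrm{id}+\phi$, hence $\theta$, is injective on $D_{\lambda}(q^0,v^0)$ (from $t_1+\phi(t_1)=t_2+\phi(t_2)$ one gets $|t_1-t_2|\le\delta|t_1-t_2|$). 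For the image, given any $w$ with $|w|<(1-\delta)R$, the map $t\mapsto w-\phi(t)$ sends $\{|t|\le R\}$ into itself and is a $\delta$-contraction, so by the Banach fixed point theorem it has a fixed point $t$, which moreover lies in the open disk $\{|t|<R\}$ and satisfies $\theta(t)=c\,w$. Letting $w$ range over $\{|w|<(1-\delta)R\}$ shows $\theta$ takes all values in $\{|\tau|<(1-\delta)\,c\,R\}$; being injective and holomorphic, it therefore restricts to a biholomorphism of $\theta^{-1}(\{|\tau|<(1-\delta)cR\})$ onto $\{|\tau|<(1-\delta)cR\}$, with holomorphic inverse $\theta^{-1}$ and $\theta^{-1}(0)=0$.

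It then remains to set $\hat q(\tau):=q(\theta^{-1}(\tau))$, $\hat v(\tau):=v(\theta^{-1}(\tau))$ on $\{|\tau|<(1-\delta)cR\}$ (and extend to the closed disk by continuity): these are holomorphic, satisfy $\hat q(0)=q^0$, $\hat v(0)=v^0$, and, using $(\theta^{-1})'(\tau)=1/\theta'(\theta^{-1}(\tau))=s(\hat q(\tau),\hat v(\tau))$ together with (\ref{eq:Nbodyq})--(\ref{eq:Nbodyv}) and the chain rule, they solve (\ref{eq:Nbodytau}). Since the right-hand side of (\ref{eq:Nbodytau}) is real-analytic in a neighbourhood of $(q^0,v^0)$, Cauchy--Lipschitz yields uniqueness, so $(\hat q,\hat v)$ is indeed the solution of (\ref{eq:Nbodytau})--(\ref{eq:icondtau}), holomorphic on the disk (\ref{eq:taudisk}).

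The hard part will be the second paragraph: squeezing out of the single scalar estimate (\ref{eq:delta}) the two facts that $\theta$ is injective and that its image covers the full disk of radius $(1-\delta)cR$. The contraction-mapping viewpoint (viewing $\theta$ as a $\delta$-perturbation of $t\mapsto ct$) is what makes both statements elementary; an alternative would be a Rouch\'e/argument-principle count of solutions of $\theta(t)=\tau_0$. A minor technical nuisance is the open-versus-closed-disk bookkeeping in the conclusion, which is absorbed by continuity, or by noting that the estimates of Theorem~\ref{th:L} and hypothesis (\ref{eq:delta}) in fact persist on a slightly enlarged polydisc.
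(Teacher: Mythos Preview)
Your proposal is correct and follows essentially the same route as the paper: show that $\theta$ is holomorphic on $D_{\lambda}(q^0,v^0)$, use the derivative bound $|\theta'(t)-c|\le\delta c$ coming from (\ref{eq:delta}) to prove $\theta$ is injective and that its image contains the disk (\ref{eq:taudisk}), and then pull back $(q(t),v(t))$ via $\theta^{-1}$. The only cosmetic difference is that for the covering step the paper argues directly from the lower bound $|\theta(t^*)|\ge(1-\delta)cR$ on the boundary (hence the image contains the disk), whereas you use a Banach fixed-point argument --- an alternative you yourself anticipated.
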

A positive real-analytic function $s(q,v)$ satisfying (\ref{eq:delta}) for some $\lambda$ and $\delta$, and such that $s(q,v)^{-1}L(q,v,\lambda)^{-1}$ is bounded from below for all  $(q,v) \in \mathbb{R}^{3N}\backslash\Delta \times \mathbb{R}^{3N}$ is thus a uniform global time-renormalization in the sense of Definition \ref{def:timereg}, with 
\begin{equation*}
\beta = (1-\delta) \inf_{q \in \mathbb{R}^{3N}\backslash\Delta, \ v \in  \mathbb{R}^{3N}} s(q,v)^{-1}L(q,v,\lambda)^{-1}.
\end{equation*}
Motivated by that, we thus seek a real-analytic function bounding $L(q,v,\lambda)$ from above for $(q,v)$ in $(\mathbb{R}^{3N}\backslash\Delta) \times \mathbb{R}^{3N}$. Such a function can be obtained by first observing that 
\begin{equation*}
L(q,v,\lambda) = \max_{1 \leq i < j \leq N} L_{ij}(q,v,\lambda) \leq 
\sqrt{
\sum_{1 \leq i < j \leq N} L_{ij}(q,v,\lambda)^2, 
}
\end{equation*}
and second noticing, as a consequence 
of the simple inequality 
$$
\forall (t_a,t_b) \in \R_+^2, \quad \frac{a}{2} + \sqrt{\left(\frac{a}{2}\right)^2 + \frac{b}{2}} \leq \sqrt{a^2+b},
$$
that
\begin{equation*}
L(q,v,\lambda) \leq \lambda^{-1} \sqrt{
\sum_{1 \leq i < j \leq N}
\left( \frac{||v_i-v_j||}{||q_i-q_j||}\right)^2+
\lambda \, \eta(\lambda)\,  \sum_{1 \leq i < j \leq N}
 \frac{K_i(q) + K_j(q)}{||q_i-q_j||}}.
 \end{equation*}
 It is thus natural to pick up the following candidate for $s(q,v)$
 \begin{equation}
 \label{eq:s}
s(q,v) = \left(
\sum_{1 \leq i < j \leq N}
 \frac{(v_i-v_j)^T (v_i-v_j)}{(q_i-q_j)^T(q_i-q_j)}+ \sum_{1 \leq i < j \leq N}
 \frac{K_i(q) + K_j(q)}{\sqrt{(q_i-q_j)^T(q_i-q_j)}}
\right)^{-1/2}
\end{equation}
where for each $i=1,\ldots,N$,
\begin{equation}
 K_i(q) = \sum_{\substack{k=1 \\ k \neq i}}^{N} \frac{G\, m_k}{(q_i-q_k)^T(q_i-q_k)}. 
\end{equation}

Let $\lambda_0 \approx 0.244204 < \sqrt{2}-1$ be the unique positive  zero of $\lambda\eta(\lambda)=1$. Clearly, if $(q^0,v^0) \in (\mathbb{R}^{3N}\backslash\Delta) \times \mathbb{R}^{3N}$, then we have 
\begin{equation}
\label{eq:sL}
\forall 0 \leq \lambda \leq \lambda_0, \; \forall (q^0,v^0) \in (\mathbb{R}^{3N}\backslash\Delta) \times \mathbb{R}^{3N}, \quad s(q^0,v^0)^{-1}L(q^0,v^0,\lambda)^{-1}\geq \lambda.
\end{equation}
 \begin{proposition}
\label{prop:thsaux2} 
There exists  $\lambda_* \in (0,\lambda_0)$ and $\delta:(0,\lambda_*) \to (0,1) $ such that the assumptions of Proposition~\ref{prop:thsaux}  hold for $s(q,v)$ given in (\ref{eq:s}) with  arbitrary $\lambda\in (0,\lambda_*)$ and $\delta=\delta(\lambda)$.
\end{proposition}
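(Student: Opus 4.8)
The plan is to work with $A(q,v):=s(q,v)^{-2}$, the bracketed sum in (\ref{eq:s}), which splits as $A=A_1+A_2$ with $A_1(q,v)=\sum_{i<j}\frac{(v_i-v_j)^T(v_i-v_j)}{(q_i-q_j)^T(q_i-q_j)}$ and $A_2(q)=\sum_{i<j}\frac{K_i(q)+K_j(q)}{\sqrt{(q_i-q_j)^T(q_i-q_j)}}$. I would reduce \emph{both} requirements of Proposition~\ref{prop:thsaux} to the single estimate: there is a function $\lambda\mapsto\epsilon(\lambda)$ with $\epsilon(\lambda)\to0$ as $\lambda\to0^+$ such that for every $q^0\in\mathbb{R}^{3N}\backslash\Delta$, $v^0\in\mathbb{R}^{3N}$ and every $\lambda\in(0,\sqrt2-1)$,
\[
|A(q,v)-A(q^0,v^0)|\le\epsilon(\lambda)\,A(q^0,v^0)\qquad\text{for all }(q,v)\in\mathcal{U}_\lambda(q^0)\times\mathcal{V}_\lambda(q^0,v^0).
\]
Indeed, choosing $\lambda_*\in(0,\lambda_0)$ so small that $\epsilon|_{(0,\lambda_*)}<1$, the ratio $A(q,v)/A(q^0,v^0)$ then lies in the disc of radius $\epsilon(\lambda)<1$ about $1$, hence off $(-\infty,0]$; the principal square root thus gives a holomorphic, nonvanishing extension $s(q,v)=A(q,v)^{-1/2}$ on $\mathcal{U}_\lambda(q^0)\times\mathcal{V}_\lambda(q^0,v^0)$ (agreeing with the real positive $s$ on the reals, whence unique by the identity theorem), and, using $|\sqrt{1+z}-1|\le|z|$ for $|z|<1$,
\[
|s(q,v)^{-1}-s(q^0,v^0)^{-1}| = s(q^0,v^0)^{-1}\bigl|\sqrt{A(q,v)/A(q^0,v^0)}-1\bigr|\le\epsilon(\lambda)\,s(q^0,v^0)^{-1},
\]
which is (\ref{eq:delta}) with $\delta(\lambda)=\epsilon(\lambda)$. (Since $\mathcal{U}_\lambda,\mathcal{V}_\lambda$ are closed, I would apply the estimate with $\lambda$ replaced by some $\lambda'\in(\lambda,\lambda_*)$ to get holomorphy on an open set; note the argument will not use (\ref{eq:sL}).)

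To prove the displayed estimate, write $r^0_{ij}=\|q^0_i-q^0_j\|$, $w^0_{ij}=\|v^0_i-v^0_j\|$, $\mu=\mu(\lambda)=2\lambda+\lambda^2<1$, and apply Lemma~\ref{lem:tech}: for $q\in\mathcal{U}_\lambda(q^0)$ each $(q_i-q_j)^T(q_i-q_j)$ lies in the disc $D\bigl((r^0_{ij})^2,\mu(r^0_{ij})^2\bigr)$, so $|(q_i-q_j)^T(q_i-q_j)|\ge(1-\mu)(r^0_{ij})^2$; and for $v\in\mathcal{V}_\lambda(q^0,v^0)$, with $\tilde u_{ij}=(v_i-v_j)-(v^0_i-v^0_j)$ so that $\|\tilde u_{ij}\|\le M_{ij}(q^0,\lambda)/L(q^0,v^0,\lambda)$, one has $|(v_i-v_j)^T(v_i-v_j)-(w^0_{ij})^2|\le2w^0_{ij}\|\tilde u_{ij}\|+\|\tilde u_{ij}\|^2$. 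In $A_1$ the perturbation of the denominators contributes at most $\frac{\mu}{1-\mu}A_1(q^0)$. The perturbation of the \emph{numerators} of $A_1$ is the delicate step and the place I expect the main obstacle: bounding $\|\tilde u_{ij}\|$ through $L$ alone is useless since $L$ may be arbitrarily large. The remedy is not to bound $L$ from above at all, but to use that $L_{ij}(q^0,v^0,\lambda)$ is the larger root of $\lambda r^0_{ij}x^2-w^0_{ij}x-\tfrac12 M_{ij}(q^0,\lambda)=0$, which yields $w^0_{ij}\le\lambda r^0_{ij}L_{ij}\le\lambda r^0_{ij}L$ and $M_{ij}(q^0,\lambda)\le2\lambda r^0_{ij}L_{ij}^2\le2\lambda r^0_{ij}L^2$; together with $\|\tilde u_{ij}\|\le M_{ij}(q^0,\lambda)/L$ this gives
\[
2w^0_{ij}\|\tilde u_{ij}\|+\|\tilde u_{ij}\|^2\le2\lambda r^0_{ij}M_{ij}(q^0,\lambda)+\frac{M_{ij}(q^0,\lambda)^2}{L^2}\le4\lambda r^0_{ij}M_{ij}(q^0,\lambda)=4\lambda\,\eta(\lambda)\,r^0_{ij}\bigl(K_i(q^0)+K_j(q^0)\bigr).
\]
Dividing by $(1-\mu)(r^0_{ij})^2$ and summing, the numerator perturbation of $A_1$ is bounded by $\frac{4\lambda\eta(\lambda)}{1-\mu}A_2(q^0)$; the crucial point is that $M_{ij}$ carries exactly the factor $K_i+K_j$ that appears in $A_2$, so this term is small, of order $\lambda\eta(\lambda)$, relative to $A_2(q^0)$.

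For $A_2$ only positions enter. The same denominator estimates give $|K_i(q)-K_i(q^0)|\le\frac{\mu}{1-\mu}K_i(q^0)$, and, since each $(q_i-q_j)^T(q_i-q_j)$ sits in a disc about $(r^0_{ij})^2$ avoiding $0$, the principal square root is holomorphic with $\bigl|1/\sqrt{(q_i-q_j)^T(q_i-q_j)}-1/r^0_{ij}\bigr|\le\frac{\mu}{\sqrt{1-\mu}}\,(r^0_{ij})^{-1}$. Expanding the $(i,j)$-th terms of $A_2(q)-A_2(q^0)$ with these bounds and summing yields $|A_2(q)-A_2(q^0)|\le c_2(\lambda)A_2(q^0)$ with $c_2(\lambda)=\mu\bigl((1-\mu)^{-3/2}+(1-\mu)^{-1/2}\bigr)$. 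Collecting everything,
\[
|A(q,v)-A(q^0,v^0)|\le\frac{\mu}{1-\mu}A_1(q^0)+\Bigl(\frac{4\lambda\eta(\lambda)}{1-\mu}+c_2(\lambda)\Bigr)A_2(q^0)\le\epsilon(\lambda)\,A(q^0,v^0),
\]
with $\epsilon(\lambda)=\max\bigl(\tfrac{\mu}{1-\mu},\,\tfrac{4\lambda\eta(\lambda)}{1-\mu}+c_2(\lambda)\bigr)$. Since $\mu(\lambda)\to0$ and $\lambda\eta(\lambda)\to0$ as $\lambda\to0^+$, indeed $\epsilon(\lambda)\to0$; choosing $\lambda_*\in(0,\lambda_0)$ with $\epsilon|_{(0,\lambda_*)}<1$ and setting $\delta=\epsilon$ completes the argument.
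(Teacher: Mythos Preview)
Your proof is correct and follows essentially the same route as the paper's: both work with $s^{-2}$, bound the perturbation of the velocity-numerator terms in $A_1$ by a multiple of $\lambda\eta(\lambda)\,r^0_{ij}(K_i+K_j)$ via the quadratic relation satisfied by $L_{ij}$ (the paper obtains the constant $3$ where you get $4$), and handle the position-only pieces with the same Lemma~\ref{lem:tech}-type estimates, which the paper packages separately as Lemma~\ref{lem:tech2}. The only substantive difference is that the paper tracks the constants explicitly enough to extract the numerical value $\lambda_*\approx 0.0988$, whereas your argument establishes existence only.
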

By combining Propositions~\ref{prop:thsaux} and \ref{prop:thsaux2} with (\ref{eq:sL}), we finally obtain the following.

\begin{theorem}
\label{th:s}
Consider $s(q,v)$ given by (\ref{eq:s}).
For arbitrary $q^0 \in \mathbb{R}^{3N}\backslash\Delta$, $v^0 \in \mathbb{R}^{3N}$,
the solution $(\hat q(\tau),\hat v(\tau))$ of (\ref{eq:Nbodytau})--(\ref{eq:icondtau})  is uniquely defined as a holomorphic function of the complex variable $\tau$ in the strip 
$$
\{\tau \in \mathbb{C}\ : \ |\mathrm{Im}(\tau)| \leq 0.0444443\}
$$ 
In other words,  $s(q,v)$ given by (\ref{eq:s}) is a time-renormalization function for 
$\beta=0.0444443$. 
\end{theorem}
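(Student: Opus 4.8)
\textbf{Proof proposal for Theorem~\ref{th:s}.}

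The plan is to assemble Theorem~\ref{th:s} purely as a corollary of Propositions~\ref{prop:thsaux} and~\ref{prop:thsaux2} together with the elementary inequality~(\ref{eq:sL}). First I would fix the value of $\lambda$. Proposition~\ref{prop:thsaux2} supplies a threshold $\lambda_* \in (0,\lambda_0)$ and a function $\delta = \delta(\lambda) \in (0,1)$ such that, for every initial condition $(q^0,v^0)$, the hypotheses of Proposition~\ref{prop:thsaux} are met with the function $s(q,v)$ of~(\ref{eq:s}). Applying Proposition~\ref{prop:thsaux} then shows that the solution $(\hat q(\tau),\hat v(\tau))$ of~(\ref{eq:Nbodytau})--(\ref{eq:icondtau}) extends holomorphically to the disk $|\tau| \leq (1-\delta(\lambda))\, s(q^0,v^0)^{-1} L(q^0,v^0,\lambda)^{-1}$. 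Invoking~(\ref{eq:sL}) (valid since $\lambda < \lambda_* < \lambda_0$) bounds the quantity $s(q^0,v^0)^{-1} L(q^0,v^0,\lambda)^{-1}$ from below by $\lambda$, uniformly in $(q^0,v^0)$. Hence the radius of the guaranteed disk of holomorphy is at least $(1-\delta(\lambda))\,\lambda$, independently of the initial data.

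The second step is the standard continuation/strip argument: since the above lower bound holds at \emph{every} point of the maximal real trajectory (by the time-translation structure of the autonomous system~(\ref{eq:Nbodytau}), re-centering at any $\tau^* = \theta(t^*)$), the solution extends to the strip $\{\tau \in \mathbb{C} : |\mathrm{Im}(\tau)| \leq (1-\delta(\lambda))\,\lambda\}$, and in particular the maximal $\tau$-interval is all of $\mathbb{R}$; this is exactly the conclusion of Definition~\ref{def:timereg} with $\beta = (1-\delta(\lambda))\,\lambda$. Finally I would optimize: choose $\lambda \in (0,\lambda_*)$ to maximize $(1-\delta(\lambda))\,\lambda$, and report the resulting numerical value, which the authors quote as $\beta = 0.0444443$. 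This last piece is a one-dimensional numerical optimization using the explicit formula for $\delta(\lambda)$ produced in the proof of Proposition~\ref{prop:thsaux2}.

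The genuinely substantive work is \emph{not} in this theorem but has been pushed into Proposition~\ref{prop:thsaux2}: verifying that the explicit $s(q,v)$ of~(\ref{eq:s}) can be holomorphically continued to $\mathcal{U}_\lambda(q^0) \times \mathcal{V}_\lambda(q^0,v^0)$ and, crucially, that the relative variation bound~(\ref{eq:delta}) holds with some $\delta(\lambda) < 1$ for $\lambda$ small. Concretely, one must control each term $\frac{(v_i-v_j)^T(v_i-v_j)}{(q_i-q_j)^T(q_i-q_j)}$ and $\frac{K_i(q)+K_j(q)}{\sqrt{(q_i-q_j)^T(q_i-q_j)}}$ on the complex polydisk using Lemma~\ref{lem:tech} (to keep the denominators away from zero and to compare them with their values at $q^0$) and the estimates~(\ref{eq:Lqaux})--(\ref{eq:Lvaux}) of Theorem~\ref{th:L} (which guarantee $q(t) \in \mathcal{U}_\lambda(q^0)$, $v(t) \in \mathcal{V}_\lambda(q^0,v^0)$ and quantify how far the pairwise differences can drift). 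The scale-invariance noted in Remark~\ref{rem:scale_invariance} — that $s(q,v)^{-1}L(q,v,\lambda)^{-1}$ is homogeneous of degree zero under the symmetry $(q,v)\mapsto(\nu^{-2/3}q,\nu^{1/3}v)$ — is what makes a \emph{uniform} (initial-condition-independent) $\delta(\lambda)$ possible at all. Granting Proposition~\ref{prop:thsaux2}, the proof of Theorem~\ref{th:s} itself is the short chain of implications above, and the only real obstacle is bookkeeping the constant through the optimization to confirm the stated figure $0.0444443$.
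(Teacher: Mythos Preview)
Your proposal is correct and follows exactly the paper's approach: the paper states Theorem~\ref{th:s} as an immediate consequence of Propositions~\ref{prop:thsaux} and~\ref{prop:thsaux2} combined with~(\ref{eq:sL}), and the subsequent remark records precisely the optimization you describe, namely maximizing $(1-\delta(\lambda))\lambda$ over $\lambda\in(0,\lambda_*)$ with $\lambda_*=0.0988424$, the maximum $\beta=0.0444443$ being attained at $\lambda=0.0694156$. You have correctly identified that the substantive analytic work lives in Proposition~\ref{prop:thsaux2}, and your explicit mention of the time-translation/continuation step to pass from a disk at every real $\tau^*$ to a strip is a point the paper leaves implicit.
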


\begin{remark}
Proposition~\ref{prop:thsaux}  holds with $\lambda_*=0.0988424$ as it can been seen in its proof, given in Section~\ref{sect:proofs} below. The constant $\beta= 0.0444443$ is 
obtained as the maximum (attained at $\lambda= 0.0694156$) of $(1 - \delta(\lambda))\, \lambda$ subject to the constraint $0<\lambda<\lambda_*$.
\end{remark}

\begin{remark}
Theorem~\ref{th:s} can be extended in a straightforward way  to the general case of $N$-body problems considered in Remark~\ref{rem:alpha} with $K_i(q)$ given by (\ref{eq:Kalpha})  and a different width $2\beta$ of the strip.
\end{remark}


\begin{remark}
As a consequence of Theorem~\ref{th:s}, we get a globally convergent series expansion in powers of a new variable $\sigma$, related to $\tau$ with
the conformal mapping
\begin{equation*}
\tau \mapsto \sigma =\frac{\exp(\frac{\pi}{2\beta} \tau)-1}{\exp(\frac{\pi}{2\beta} \tau)+1}.
\end{equation*}
that maps the strip $\{\tau \in \mathbb{C}\ : \ |\mathrm{Im}(\tau)| \leq \beta\}$ into the unit disk. 
This is closely related to Sundman's result~\cite{sundman} for the 3-body problem as well as Wang's results~\cite{wang} for the general case of $N$-body problems. It is worth emphasizing that, in contrast with Wang's solution,  our approach remains valid in the limit where  $\displaystyle \min_{1\leq i  \leq N} m_i/M \to 0$ with $M=\sum_{1 \leq i \leq N} m_i$.

\end{remark}

\subsection{Alternative time-renormalization functions}\label{ss:altfcn}


A computationally simpler alternative of (\ref{eq:s}) can be derived by observing that for each $1\leq i < j \leq N$,
\begin{equation*}
K_i(q) +K_j(q) \leq \sum_{1\leq k <\ell \leq N} \frac{G\, (m_{k} + m_{\ell})}{\|q_k-q_\ell\|^2}.
\end{equation*}
From that, it is straightforward to check that the proofs of Proposition~\ref{prop:thsaux2} and Theorem~\ref{th:s} are also valid for the time-renormalization function  
\begin{equation}
\label{eq:s2}
s(q,v) = \left(
\sum_{1 \leq i < j \leq N}
\left( \frac{||v_i-v_j||}{||q_i-q_j||}\right)^2+ A(q)
\sum_{1 \leq i < j \leq N}
 \frac{G\, (m_i + m_j)}{\|q_i-q_j\|^2} 
\right)^{-1/2}
\end{equation}
where
\begin{equation*}
A(q) = \sum_{1 \leq i < j \leq N}
 \frac{1}{\|q_i-q_j\|}. 
\end{equation*}

Another alternative time-renormalization function $s(q,v)$ can be obtained as follows: It is apparent from the proof of Theorem~\ref{th:L} given in Section~\ref{sect:proofs} that its conclusion remains true if $M_{ij}(q,\lambda)$ in (\ref{eq:Mij}) is replaced by any  upper bound of
\begin{equation}
\label{eq:supgij}
\sup_{q \in \mathcal{U}_{\lambda}(q^0)} \|g_i(q) - g_j(q)\|
\end{equation}
in {\em lieu} of  $\eta(\lambda)(K_i(q^0) + K_j(q^0))$. E.g., by writing  for any $1\leq i < j \leq N$ as 
\begin{align*}
g_i(q)-g_j(q)= \frac{G \, (m_i+m_j)}{ D(q_i,q_j)} (q_j-q_i) 
+ \sum_{\substack{k=1 \\ k \neq i, j}}   
\left( \frac{G \, m_k}{D(q_i,q_k)}\, (q_k-q_i) \right.
 \left. -  \frac{G \, m_k}{D(q_j,q_k)}\, (q_k-q_j) \right)
\end{align*}
where $D(q_i,q_j)=((q_i-q_j)^T (q_i-q_j))^{3/2}$ and accordingly for indices $j$ and $k$, 
it can be proven that there exists $\lambda_0>0$ such that 
\begin{align*}
\forall \lambda \in (0,\lambda_0), \quad \sup_{q \in \mathcal{U}_{\lambda}(q^0)} \|g_i(q) - g_j(q)\|
 &\leq \tilde{M}_{ij}(q^0,\lambda), 
 \end{align*}
 where
 \begin{align*}
  \tilde{M}_{ij}(q^0,\lambda) &= 
  \eta(\lambda) \, \frac{G\, (m_i+m_j)}{ \|q_i^0-q_j^0\|^2}  + \tilde{\eta}(\lambda)\, \sum_{\substack{k=1 \\ k \neq i, j}}  
\left( \frac{G\, m_k}{\|q_i-q_k\|^3} +  \frac{G\, m_k}{\|q_j-q_k\|^3} \, \right)\|q_i^0-q_j^0\|,
 \end{align*}
 for some  continuous function $\tilde{\eta}(\lambda)$ of $\lambda$ in $[0,\lambda_0)$.
This leads to consider $\tilde L$ instead of $L$ with 
\begin{equation*}
\tilde{L}(q,v,\lambda) = \max_{1 \leq i < j \leq N} \sqrt{\left( \frac{||v_i-v_j||}{\lambda\, \|q_i-q_j\|}\right)^2+
\frac{\tilde{M}_{ij}(q,\lambda)}{\lambda\, \|q_i-q_j\|}},
\end{equation*}
Then, noticing that 
\begin{equation*}
  \tilde{M}_{ij}(q^0,\lambda) \leq \tilde{\eta}(\lambda)\, \sum_{1 \leq k < l \leq N} \frac{G\, (m_k + m_l)}{\|q_k-q_l\|^3}, \end{equation*}
this suggests to consider
\begin{equation}
\label{eq:s3}
s(q,v) = \left(
\kappa\, \sum_{1 \leq i < j \leq N}
\left( \frac{||v_i-v_j||}{||q_i-q_j||}\right)^2+ 
\sum_{1 \leq i < j \leq N}
 \frac{G\, (m_i + m_j)}{\|q_i-q_j\|^3} 
\right)^{-1/2}
\end{equation}
for a suitable $\kappa>0$
as an alternative time-renormalization function. Indeed, it is possible to prove, in the vein  of Theorem~\ref{th:s}, that $s(q,v)$ given by (\ref{eq:s3}) (for some $\kappa>0$) is a uniform global time-renormalization function  (in the sense of Definition \ref{def:timereg}) for some value of $\beta>0$ depending on $\kappa >0$. 




\begin{remark}
Time-renormalization  functions that do not depend on the velocities $v_i$ are required for some numerical integrators. Such functions can be obtained by  replacing in (\ref{eq:s}), in (\ref{eq:s2}) or in (\ref{eq:s3}),  each term of the form $\|v_i-v_j\|^2$ by 
its upper bound 
$$\|v_i-v_j\|^2 \leq 2 (m_i^{-1/2} + m_j^{-1/2})^2 \, T(v)  =  2 (m_i^{-1/2} + m_j^{-1/2})^2 \, (E_0 + U(q)),$$ 
where 
 \begin{equation*}
T(v) = \frac12\, \sum_{i=1}^N m_i\, \|v_i\|^2, \quad
U(q) = \sum_{1 \leq i < j \leq N} \frac{G\, m_i\, m_j}{\|q_i-q_j\|}.
\end{equation*}
Note however that such a time-renormalization is no longer valid  in the limit when one of the masses vanishes.

 As an alternative function that does not depend on the velocities, one can consider (\ref{eq:s2}) with $\kappa=0$ (already considered by K. Babadzanjanz  \cite{babadzanjanz} in 1979):
\begin{equation}
\label{eq:s4}
s(q) = \left(
\sum_{1 \leq i < j \leq N}
 \frac{G\, (m_i + m_j)}{\|q_i-q_j\|^3} 
\right)^{-1/2}.
\end{equation}
If the close encounter corresponds to the periastrom of a very eccentric elliptic orbit or a parabolic  orbit, then $\frac{G\, (m_i + m_j)}{\|q_i-q_j\|^3}$ and $\frac12\, \left( \frac{||v_i-v_j||} {||q_i-q_j||}\right)^2$ will be of comparable magnitudes, and hence (\ref{eq:s4}) will behave similarly to (\ref{eq:s3}). 
However, that argument fails for binary close encounters where the inertial term $\left(\|v_i-v_j\|/(\|r_i-r_j\|)\right)$ dominates over the gravitational term. Actually, (\ref{eq:s4}) is not a uniform global time-renormalization function in the sense of Definition~\ref{def:timereg} (see second example in Section~\ref{sect:numer}).
\end{remark}

\begin{remark}
Some numerical integrators for time-renormalized equations require that $s(q,v)$ only depend on the potential function $U(q)$~\cite{mikkola,tremaine}.
In that case,  it makes sense to choose\footnote{
This is the choice of time-renormalization function adopted in~\cite{wang} to change the time-variable,  as an intermediate step to get the globally convergent series expansion of the solutions of the $N$-body problem} 
$s(q) = U(q)^{3/2}$, which is expected to behave asymptotically as (\ref{eq:s4})
near binary collisions. 
 It should be noted~(see \cite{laskar2019}) that this may perform poorly for systems with small mass ratios.
 \end{remark}
 
\begin{remark}
All time-renormalization functions introduced so far are scale invariant in the sense of~\cite{budd,blanes}, that is, $s(\nu^{-2/3}q, \nu^{1/3} v) = \nu^{-1} s(q,v)$. This implies that, if $(\hat q(\tau), \hat v(\tau), t(\tau))$ is a solution of the system consisting on the equations (\ref{eq:Nbodytau}) together with $\frac{d}{d\tau} t = s(\hat q, \hat v)$, then, for each $\nu>0$,  $(\nu^{-2/3} \hat q(\tau), \nu^{1/3} \hat v(\tau), \nu\,  t(\tau))$ is also a solution of that system.
\end{remark}

\section{Proofs of main results} 
\label{sect:proofs}

Existence and uniqueness of solutions of initial value problems of ordinary differential equations in the complex domain can be found in the book of E. Hille \cite{hille}, in particular, Theorem 2.3.1 (page 48) and its generalization to the vector case, referred to as Theorem 2.3.2 in \cite{hille}.  The technique of  proof  in Theorem 2.3.1 (based on Picard iteration) is the one we adopted in the following. \\ \\
\begin{proof}[Proof of Theorem~\ref{th:L}]
The solution $q(t)$ of equations (\ref{eq:Nbodyq})--(\ref{eq:icond}),  as long as it exists as a holomorphic function of the complex variable $t$, satisfies 
$$
q_i(t)=q_i^0+ t\, v_i^0 + \int_0^t \int_0^{\sigma} g_i(q(r)) dr d\sigma, \quad i=1,\ldots,N.
$$
We define the Picard iteration for $q_i(t)$, $i=1,\ldots,N$,  as follows:
\begin{align*}
n=0: & \quad q_i^{[0]}(t) = q_i^0 + t \, v_i^0, \\
n \geq 1: &  \quad q_i^{[n]}(t) = q_i^0 + t \, v_i^0+ \int_0^t \int_0^{\sigma} g_i(q^{[n-1]}(r)) dr d\sigma.
\end{align*}
We will prove by induction that for all $n \in \N$
\begin{enumerate}
\item[(i)] $q^{[n]}$ is well defined and holomorphic  on $D_{\lambda}(q^0,v^0)=\{t \in \C, |t| \leq L(q^0,v^0,\lambda)^{-1}\}$,
\item[(ii)] for all $t \in D_{\lambda}(q^0,v^0)$, $q^{[n]}(t) \in \mathcal{U}_{\lambda}(q^0)$.
\end{enumerate}
Note that, most importantly,  $t \in D_{\lambda}(q^0,v^0)$ if and only if 
\begin{equation}
\label{eq:taux}
|t|\, \|v_i^0-v_j^0\| + \frac{|t|^2}{2}\, M_{ij}(q^0,\lambda) \leq \lambda \|q_i^0-q_j^0\|, \quad 1\leq i < j \leq N.
\end{equation}
Given that statements (i) and (ii) hold for $n=0$, assume that they hold for some  $n-1 \geq 0$. We have in particular that,
\begin{equation*}
\forall 1\leq i < j \leq N, \; \forall t \in D_{\lambda}(q^0,v^0), \quad \|g_i(q^{[n-1]}(t)) - g_j(q^{[n-1]}(t))\| \leq M_{i j}(q^0,\lambda).
\end{equation*}
Then, as a double integral of a holomorphic function, which is itself holomorphic as the composition of two holomorphic functions, $q^{[n]}(t)$ is also holomorphic on $D_{\lambda}(q^0,v^0)$. In order to prove (ii), we estimate for $i=1,\ldots,N$
\begin{align*}
\|q_i^{[n]}(t)-q_j^{[n]}(t)-(q_i^0 - q_j^0)\| &\leq |t|\, \|v_i^0-v_j^0\| +\int_0^t \int_0^{\sigma} M_{i j}(q^0,\lambda) dr d\sigma\\
&\leq   |t|\, \|v_i^0-v_j^0\| + \frac{|t|^2}{2} \, M_{i j}(q^0,\lambda),
\end{align*}
which, together with (\ref{eq:taux}), implies that $q^{[n]}(t) \in \mathcal{U}_{\lambda}(q^0)$.

 Now, in order to prove the convergence 
of the sequence $q^{[n]}(t)$ in the Banach space of holomorphic functions on $\mathcal{U}_{\lambda}(q^0)$ with norm $\|\cdot\|_*{}$
\begin{equation*}
\|q\|_{*} = \max_{1\leq i \leq N} \|q_i\|,
\end{equation*} 
we write 
$$
q(t) - q^{[0]}(t) = \sum_{n=1}^\infty (q^{[n]}(t)-q^{[n-1]}(t))
$$
and show that the series is absolutely convergent for all $t \in D_{\lambda}(q^0,v^0)$. To this aim, we write 
\begin{align*}
 \|q_i^{[1]}(t)-q_i^{[0]}(t)\| \leq  \frac{|t|^2}{2} \, M_{i}(q^0,\lambda), \quad
 \|q_i^{[2]}(t)-q^{[1]}(t)\| \leq \ell_i(q^0,\lambda) \, \eta(\lambda) K_{i}(q^0) \frac{|t|^4}{4!}
\end{align*}
where $\ell_i(q^0,\lambda)$ is a Lipschitz constant of $g_i$ in $\mathcal{U}_{\lambda}(q^0)$.
More generally, we obtain
\begin{align*}
 \|q_i^{[n]}(t)-q_i^{[n-1]}(t)\| \leq \frac{ M_{i}(q^0,\lambda) }{\ell_i(q^0,\lambda)} \frac{(\sqrt{\ell_i(q^0,\lambda)} |t|)^{2n}}{(2n)!}.
\end{align*}
Note that we have relied on the fact that $q^{[n]}(t) \in \mathcal{U}_{\lambda}(q^0)$ for $t \in D_\lambda(q^0,v^0)$ in order to bound $g_i$ by $\eta(\lambda) K_{i}(q^0)$ and to use the Lipschitz constant on $\mathcal{U}_{\lambda}(q^0)$. 
As a consequence, the series is absolutely convergent with 
$$
\sum_{n=1}^\infty \|q^{[n]}(t)-q^{[n-1]}(t)\| \leq \frac{ M_{i}(q^0,\lambda) }{\ell_i(q^0,\lambda)} \left(\cosh(\sqrt{\ell_i(q^0,\lambda)} |t|)-1\right)
$$
and this proves the existence of a solution on $D_{\lambda}(q^0,v^0)$, holomorphic at least on the interior of $D_{\lambda}(q^0,v^0)$ and continuous on the boundary. The uniqueness can be obtained through the use of the Lipschitz constant again. 
\end{proof} \\ \\
%
\begin{proof}[Proof of Proposition~\ref{prop:thsaux}] 
First observe that  (\ref{eq:delta}) implies that, for all $(q,v) \in \mathcal{U}_{\lambda}(q^0) \times  \mathcal{V}_{\lambda}(q^0,v^0)$,
\begin{equation*}
(1-\delta) \, s(q^0,v^0)^{-1} \leq |s(q,v)^{-1}| \leq (1+\delta) \, s(q^0,v^0)^{-1},
\end{equation*}
so that, in particular, 
$s(q,v)^{-1}$ is holomorphic in $\mathcal{U}_{\lambda}(q^0) \times  \mathcal{V}_{\lambda}(q^0,v^0)$.
We thus have that  $s(q(t),v(t))^{-1}$ is a holomorphic function of the complex variable $t$, so that $\theta(t)$ (see Definition \ref{def:timereg}) is a well-defined holomorphic function in the disk
\begin{equation*}
D_{\lambda}(q^0,v^0) = \left\{ t \in \mathbb{C}\ : \ |t| \leq L(q^0,v^0,\lambda)^{-1} \right\}.
\end{equation*} 
Furthermore, if $t_1,t_2 \in D_{\lambda}(q^0,v^0) $, 
\begin{equation*}
|\theta(t_2) -\theta(t_1)| =
 \left| 
 \int_{t_1}^{t_2} s(q(t),v(t))^{-1}\, dt 
 \right| \geq (1 -\delta)\, s(q^0,v^0)^{-1}\, |t_2-t_1|.
\end{equation*}
Whence it follows that $\theta$ is injective in $D_{\lambda}(q^0,v^0) $. If $t^* \in \mathbb{C}$ is on the boundary of $D_{\lambda}(q^0,v^0) $, then
\begin{align*}
|\theta(t^*)| &=
 \left| 
 \int_{0}^{t^*} s(q(t),v(t))^{-1}\, dt
 \right| \\
 &\geq (1 -\delta)\, s(q^0,v^0)^{-1}\, |t^*| \\
 &= (1 -\delta)\, s(q^0,v^0)^{-1}\, L(q^0,v^0,\lambda)^{-1}.
\end{align*}
Hence, the image by $\theta$ of $D_{\lambda}(q^0,v^0) $ contains the disk (\ref{eq:taudisk}).  We conclude that $\theta^{-1}$ is well-defined and holomorphic in the disk (\ref{eq:taudisk}). 
 \end{proof} \\ \\
We next state without proof the following auxiliary lemma.
\begin{lemma}
\label{lem:tech2}
Consider $u \in \R^3$, $u \in \C^3$ such that $\|u-u^0\| \leq \lambda \, \|u^0\|$, with $\lambda \in (0,\sqrt{2}-1)$. The following estimates hold true
\begin{align*}
(i) \quad & |u^T u - \|u^0\|^2| \geq  (2\lambda+\lambda^2)\, \|u^0\|,\\
(ii) \quad & |u^T u| \geq  (1-2\lambda-\lambda^2)\, \|u^0\|,\\
(iii) \quad &  \left| (u^T u)^{-1} -  \|u^0\|^{-2} \right| \leq  \lambda\, \alpha(\lambda)\, \|u^0\|^{-2}, \\
(iv) \quad & 
\left| (u^T u)^{1/2}-  \|u^0\| \right|  \leq \lambda\, \beta(\lambda)\, \|u\|^0, \\
(v) \quad & \left|(u^T u)^{-1/2} -  \|u^0\|^{-1} \right| \leq \lambda\, \gamma(\lambda)\, \|u^0\|^{-1},
\end{align*}
where
\begin{align*}
\alpha(\lambda) = \frac{2+\lambda}{1-2\lambda-\lambda^2}, \quad 
\beta(\lambda) = \frac{2+\lambda}{1 + \sqrt{1-2\lambda -\lambda^2}}, \quad 
\gamma(\lambda) =  \frac{\beta(\lambda)}{ \sqrt{1-2\lambda -\lambda^2}}.
\end{align*}
\end{lemma}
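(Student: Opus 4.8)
The plan is to derive the five estimates from Lemma~\ref{lem:tech} together with elementary manipulations, treating first the three algebraic bounds $(i)$--$(iii)$ and then the two square-root bounds $(iv)$--$(v)$. Throughout I would write $w = u - u^0$, so that $\|w\| \leq \lambda\,\|u^0\|$, and abbreviate $a = \|u^0\|$.

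For $(i)$ and $(ii)$ I would simply invoke Lemma~\ref{lem:tech} with $U = u$ and $u = u^0$: its part $(i)$ gives $|u^T u - a^2| \leq 2\,a\,\|w\| + \|w\|^2 \leq (2\lambda + \lambda^2)\,a^2$, and its part $(ii)$ gives $|u^T u| \geq a^2 - 2\,a\,\|w\| - \|w\|^2 \geq (1 - 2\lambda - \lambda^2)\,a^2 > 0$, the positivity being exactly the content of the hypothesis $\lambda < \sqrt 2 - 1$. For $(iii)$ I would write
\[
(u^T u)^{-1} - a^{-2} = \frac{a^2 - u^T u}{a^2\,(u^T u)},
\]
bound the numerator by $(i)$ and $|u^T u|$ from below by $(ii)$, and simplify; the resulting constant is $\lambda\,\alpha(\lambda)\,a^{-2}$ with $\alpha(\lambda) = (2+\lambda)/(1-2\lambda-\lambda^2)$.

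The substance of the proof lies in $(iv)$ and $(v)$. The first thing to observe is that, since $\lambda < \sqrt 2 - 1$, estimate $(i)$ forces $u^T u$ into the open disk of centre $a^2$ and radius $(2\lambda+\lambda^2)\,a^2 < a^2$, hence into the open right half-plane; so the principal square root $z := (u^T u)^{1/2}$ is unambiguously defined and satisfies $z^2 = u^T u$. I would then prove $\mathrm{Re}(z) \geq a\,\sqrt{1 - 2\lambda - \lambda^2}$ from the identity $2\,(\mathrm{Re}\,z)^2 = |z|^2 + \mathrm{Re}(z^2) = |u^T u| + \mathrm{Re}(u^T u)$, bounding both terms on the right below by $(1-2\lambda-\lambda^2)\,a^2$ using $(i)$ and $(ii)$. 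Combining this with $|z|^2 = |u^T u| \geq (1-2\lambda-\lambda^2)\,a^2$ gives
\[
|z + a|^2 = |z|^2 + a^2 + 2\,a\,\mathrm{Re}(z) \geq a^2\,\big(1 + \sqrt{1-2\lambda-\lambda^2}\,\big)^2 .
\]
Then $(iv)$ follows by writing $z - a = (u^T u - a^2)/(z + a)$ and bounding the numerator with $(i)$ and the denominator from below with the last display, producing $\beta(\lambda) = (2+\lambda)/(1+\sqrt{1-2\lambda-\lambda^2})$; and $(v)$ follows from
\[
(u^T u)^{-1/2} - a^{-1} = -\,\frac{z - a}{a\,z},
\]
using $(iv)$ for $|z - a|$ and $|z| \geq a\,\sqrt{1-2\lambda-\lambda^2}$ (from $(ii)$) for the denominator, which yields $\gamma(\lambda) = \beta(\lambda)/\sqrt{1-2\lambda-\lambda^2}$.

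The one genuinely delicate point is the handling of the square root in $(iv)$ and $(v)$: one must check that $u^T u$ never leaves the right half-plane — this is the second place where $\lambda < \sqrt 2 - 1$ is used — so that ``$(u^T u)^{1/2}$'' is meaningful, and one must use the real-part lower bound for $|z + a|$ rather than the crude estimate $|z + a| \geq a$; the latter would only give a denominator $a$ and hence a constant strictly larger than the sharp $\beta(\lambda)$. Everything else is routine bookkeeping.
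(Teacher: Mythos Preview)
Your proof is correct and complete. The paper explicitly states this lemma \emph{without proof}, so there is no authors' argument to compare against; you have supplied precisely the derivation they omitted, and your constants match theirs exactly. You also correctly read the obvious misprints in the statement (the inequality in $(i)$ should be $\leq$, and the right-hand sides of $(i)$ and $(ii)$ should carry $\|u^0\|^2$, not $\|u^0\|$).
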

In what follows, we use the notations $r_{ij} = q_i-q_j$, $w_{ij} = v_i-v_j$, and likewise with zero superscript. \\
\begin{proof}[Proof of Proposition~\ref{prop:thsaux2}] 
Under the assumption  that $(q,v) \in \mathcal{U}_{\lambda}(q^0) \times \mathcal{V}_{\lambda}(q^0,v^0)$, 
 Lemma~\ref{lem:tech2} implies that, for $i=1,\ldots,N$, 
\begin{equation*}
| K_i(q) - K_i(q^0)| \leq \lambda\, \alpha(\lambda) \, K_i(q^0).
\end{equation*}
and
\begin{align*}
 \left| \frac{K_i(q) + K_j(q)}{(r_{ij}^T r_{ij})^{1/2}} - 
 \frac{K_i(q^0) + K_j(q^0)}{\|r_{ij}^0 \|}
 \right| &\leq
  \left| \frac{ K_i(q) - K_i(q^0) + K_j(q) - K_j(q^0) }{(r_{ij}^T r_{ij})^{1/2}}\right| \\ 
&+ (K_i(q^0) + K_j(q^0)) 
\left|
\frac{1}{(r_{ij}^T r_{ij})^{1/2}} -
\frac{1}{\|r_{ij}^0 \|}
\right|\\
&\leq 
\lambda\, \nu(\lambda)\,  \frac{K_i(q^0) + K_j(q^0)}{\|r_{ij}^0 \|},
\end{align*}
where
\begin{equation*}
\nu(\lambda) = \frac{\alpha(\lambda)}{\sqrt{1-2\lambda - \lambda^2}} + \gamma(\lambda).
\end{equation*}
On the other hand, $ \|w_{ij}-w_{ij}^0\| \leq M_{ij}(q^0,\lambda)/L(q^0,v^0,\lambda)$ implies that
\begin{equation*}
\|w_{ij}-w_{ij}^0\| \leq \frac{\lambda\, \|r_{ij}^0\| \, M_{ij}(q^0,\lambda)}{\|w_{ij}^0\|}   \, \mbox{ and } \, 
 \|w_{ij}-w_{ij}^0\|^2 \leq \lambda\, \|r_{ij}^0\| \, M_{ij}(q^0,\lambda).
\end{equation*}
This, together with  Lemma~\ref{lem:tech} gives
\begin{align*}
\left| w_{ij}^T w_{ij} - \|w_{ij}^0\|^2 \right|& \leq 3\,  \lambda\, \|r_{ij}^0\| \, M_{ij}(q^0,\lambda)\\
& = 3\,  \lambda\, \eta(\lambda)\, \|r_{ij}^0\|\, (K_i(q^0) + K_j(q^0)).
\end{align*}
Then, 
\begin{align*}
 \left|  \frac{ w_{ij}^T w_{ij}}{ r_{ij}^T r_{ij}} - \frac{\|w_{ij}^0\|^2}{\|r_{ij}^0\|^2} 
 \right| &\leq    \frac{\left| w_{ij}^T w_{ij} - \|w_{ij}^0\|^2 \right|}{ r_{ij}^T r_{ij}}  
  + \|w_{ij}^0\|^2\,  
\left(
\frac{1}{ r_{ij}^T r_{ij}} -
\frac{1}{\|r_{ij}^0 \|^2}
\right)\\
 &\leq \lambda\, \xi(\lambda)\,  \frac{K_i(q^0) + K_j(q^0)}{\|r_{ij}^0 \|}  
  + \lambda\, \alpha(\lambda)\,  
\frac{\|w_{ij}^0\|^2}{\|r_{ij}^0 \|^2},
\end{align*}
where
\begin{equation*}
\xi(\lambda) = \frac{3\, \eta(\lambda)}{(1-2\lambda - \lambda^2)^{2}}.
\end{equation*}
Hence, 
\begin{align*}
\left| s(q,v)^{-2} - s(q^0,v^0)^{-2} \right| &\leq \lambda\, \alpha(\lambda)\,  \frac{\|w_{ij}^0\|^2}{\|r_{ij}^0 \|^2} + 
 \lambda\, (\nu(\lambda) + \xi(\lambda)) \, \frac{K_i(q^0) + K_j(q^0)}{\|r_{ij}^0 \|}\\
& \leq  \lambda\, \mu(\lambda) \, s(q^0,v^0)^{-2},
\end{align*}
where
\begin{equation*}
\mu(\lambda) = \max(\gamma(\lambda), \nu(\lambda) + \xi(\lambda)) = \nu(\lambda) + \xi(\lambda).
\end{equation*}
Then, provided that $\lambda\, \mu(\lambda) < 1$ (i.e., provided that $\lambda < \lambda_0=0.0988424 < \sqrt{2}-1$),
\begin{align*}
| s(q,v)^{-2} | \geq (1 - \lambda\, \mu(\lambda)) \, s(q^0,v^0)^{-2}, \\
| s(q,v)^{-1} | \geq \sqrt{1 - \lambda\, \mu(\lambda)}\, s(q^0,v^0)^{-1}, \\
| s(q,v)^{-1} -s(q^0,v^0)^{-1} | \leq  \lambda\, \delta(\lambda)\, s(q^0,v^0)^{-1},
\end{align*}
where 
\begin{equation*}
\delta(\lambda) =  \frac{\mu(\lambda)}{1 + \sqrt{1 -  \lambda\, \mu(\lambda)}}.
\end{equation*}
\end{proof}


\section{Numerical experiments} \label{sect:numer}

We consider three examples, corresponding to the solution $(q(t),v(t))$ of (\ref{eq:Nbodyq})--(\ref{eq:icond}) for two 3-body problems and one 9-body problem in certain time intervals $t \in [0,T]$. For each example, we obtain several figures:
\begin{enumerate}
\item A comparison of the actual radius of convergence with the lower bound given by Theorem~\ref{th:L}. More precisely,
\begin{itemize}
\item we obtain accurate approximations $(q^k, v^k)$ of the solution $(q(t^k),v(t^k))$ for some discretization $0=t^0 < t^1 < \cdots < t^k < \cdots< t^n=T$ of the time interval $[0,T]$. This is done 
by numerically integrating the renormalized equations (\ref{eq:Nbodytau}), with the time-renormalization function (\ref{eq:s}), together with 
\begin{equation}
\label{eq:t}
\frac{d}{d\tau} t = s(\hat q, \hat v), \quad t(0) = 0.
\end{equation}
To that aim, we apply a Taylor integrator of order 30 with constant step-size $\Delta \tau$. More precisely, we compute the positions $q^k \in \R^{3N}$, velocities $v^k \in \R^{3N}$, and times $t^k \in \R$, corresponding to $\tau^k = k\, \Delta \tau$ for $k=1,2,\ldots$, in a step-by-step manner by considering the Taylor expansion of the solution $(\hat q(\tau), \hat v(\tau), t(\tau))$ at $\tau=\tau^k$, $k=0,1,2,\ldots$ truncated at order 30.
\item  the radius of convergence $\rho(q^k,v^k)$ of the Taylor expansion of the solution $q(t)$ centered  at $t=t^k$ is computed numerically. This is done by computing the Taylor expansion of the solution $q(t)$ centered at $t=t^k$, ($k=0,1,2,\ldots$) truncated at order 30, and numerically estimating the radius of convergence. 
\item  the radius of convergence $\rho(q^k,v^k)$ is displayed in logarithmic scale together with the lower bound $1/L(q^k,v^k,\lambda_0)$ (with $\lambda_0=0.244204$) of  $\rho(q^k,v^k)$.
\end{itemize}

\item   {We denote the time-renormalization function (23) as $s_1(q,v)$, (26) as $s_2(q,v)$, (28)
with $\kappa = 1$ as $s_3(q,v)$,  (29) as $s_4(q)$, and $s_0(q,v)\equiv1$. For each $j=0,1,2,3,4$, we denote $T_j$ the positive real number such that the time interval $[0,T]$ corresponds to the interval $[0,T_j]$ in $\tau$.
For each $j$, the maximal width of the strip (in the complex $\tau$-plane) around the integration interval $[0,T_j]$ where the solution $(\hat q(\tau), \hat v(\tau))$ is holomorphic is computed as the minimum of the radius of convergence of the Taylor expansion of $(\hat q(\tau), \hat v(\tau))$ for $\tau \in [0,T_j]$. According to Theorem~\ref{th:s}, for $j=1,2$,  the width of the strip must be greater than or equal to $2\beta =  0.0888886$.
For the purpose of fairly comparing the different time-renormalization functions $s_j$, the width of the strip corresponding to the time-renormalization function $s_j(q,v)$ is scaled by $T/T_j$. }

\item A comparison of the efficiency of numerical integration of the renormalized equations  (\ref{eq:Nbodytau}) with constant step-size with the numerical integration  of the original equations (\ref{eq:Nbodyq})--(\ref{eq:Nbodyv}) with standard adaptive step-size strategy.  For that purpose,  the embedded pair of explicit Runge-Kutta schemes of order 9(8) constructed by Verner~\cite{verner} is chosen,  implemented (and recommended for high precision computations) in the Julia package DifferentialEquations.jl~\cite{rackauckas}.   We first integrate the problem (without time-renormalization) with adaptive implementation of Verner's 9(8) embedded RK pair with relative and absolute tolerances of $10^{-14}$, which results in the application of, say, $n$ successful steps of the 9th order RK scheme. We then apply, for each time-renormalization function $s_i(q,v)$, $i=1,2,3,4$, 
the same 9th order RK scheme to the equations (\ref{eq:Nbodytau})  (together with (\ref{eq:t})) with constant time-step $\Delta \tau$ chosen so that also $n$ steps are required to integrate from $t=0$ to $t=T$. In all examples, we use high precision floating point arithmetic  (Julia's Bigfloat numbers with default precision, that is, floating point numbers with 256-bit mantissa  from GNU MPFR Library) in order to  minimize the influence of roundoff errors in the numerical results. {For a fair comparison of efficiency of, on the first hand, adaptive numerical integration  of the original equations and on the other hand, constant step-size numerical integration of renormalized equations, one should take into account the cost in CPU time of the evaluation of the renormalized equations versus the original equations. In practice, we have observed that the difference in CPU time for the case of the time-renormalization functions $s_j$ ($j=2,3,4$) is marginal (the cheapest case being $j=4$ followed by $j=3$). Indeed, the evaluation of such $s_j$ can be very efficiently implemented by adding a few elementary arithmetic operations inside the nested loop used to evaluate the original equations of motion.  The function $s_1$ is more costly, as a separate nested loop is required in this case. }
\end{enumerate}

\subsection{The Pythagorean 3-body problem} 
This is a planar 3-body problem first investigated numerically by Burrau (1913). It consists in three masses in the ratio 3:4:5 placed at rest at the vertices of a 3:4:5 right triangle. Victor Szebehely and C. Frederick Peters (1967) found, by numerically integrating the problem (using Levi-Civita renormalization near close encounters), that after several binary close encounters a stable binary is formed between the two heaviest bodies while the third one is ejected from the system.  

As in~\cite{Szebehely}, we consider the problem in adimensional units, with $G\, m_i = 5, 4, 3$,  initial positions $q^0_i =  (1,-1), (-2,-1), (1,3)$,  and zero initial velocities.  We compute the solution in the time interval $t \in [0,63]$. The trajectories of the three bodies are displayed in Figure~\ref{fig:PythagoreanRadius}.

\begin{figure}[t]
\begin{center}
\resizebox{!}{10cm}{\includegraphics{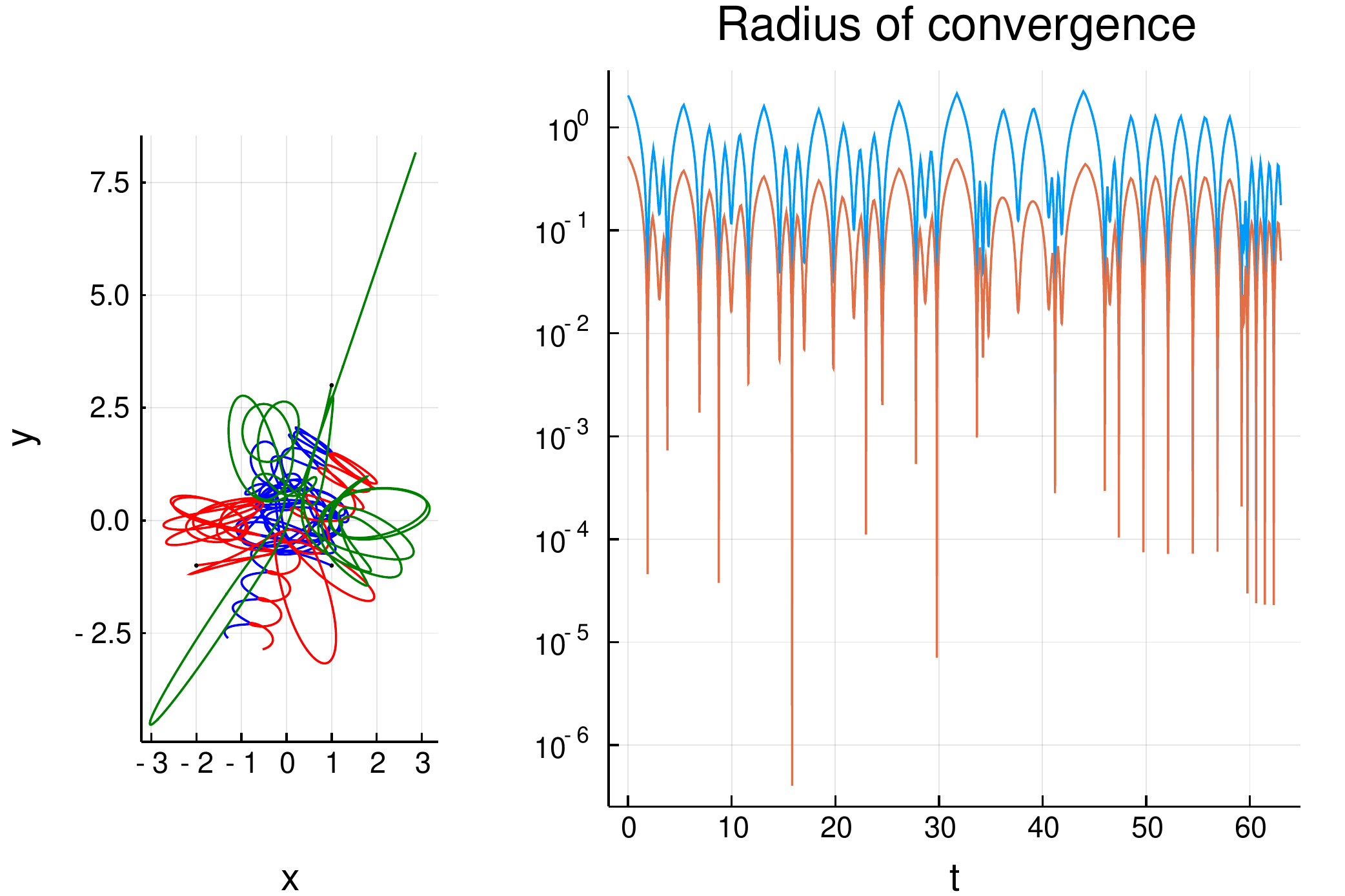}}
\end{center}
\caption{
\label{fig:PythagoreanRadius}
Trajectories of the three bodies in the Pythagorean 3-body problem (left). Values of radius of convergence $\rho(q^k,v^k)$ together with its lower bound $1/L(q^k,v^k,\lambda_0)$ for the Pythagorean 3-body problem (right)}
\end{figure}

\begin{figure}[t]
\begin{center}
\resizebox{!}{10cm}{\includegraphics{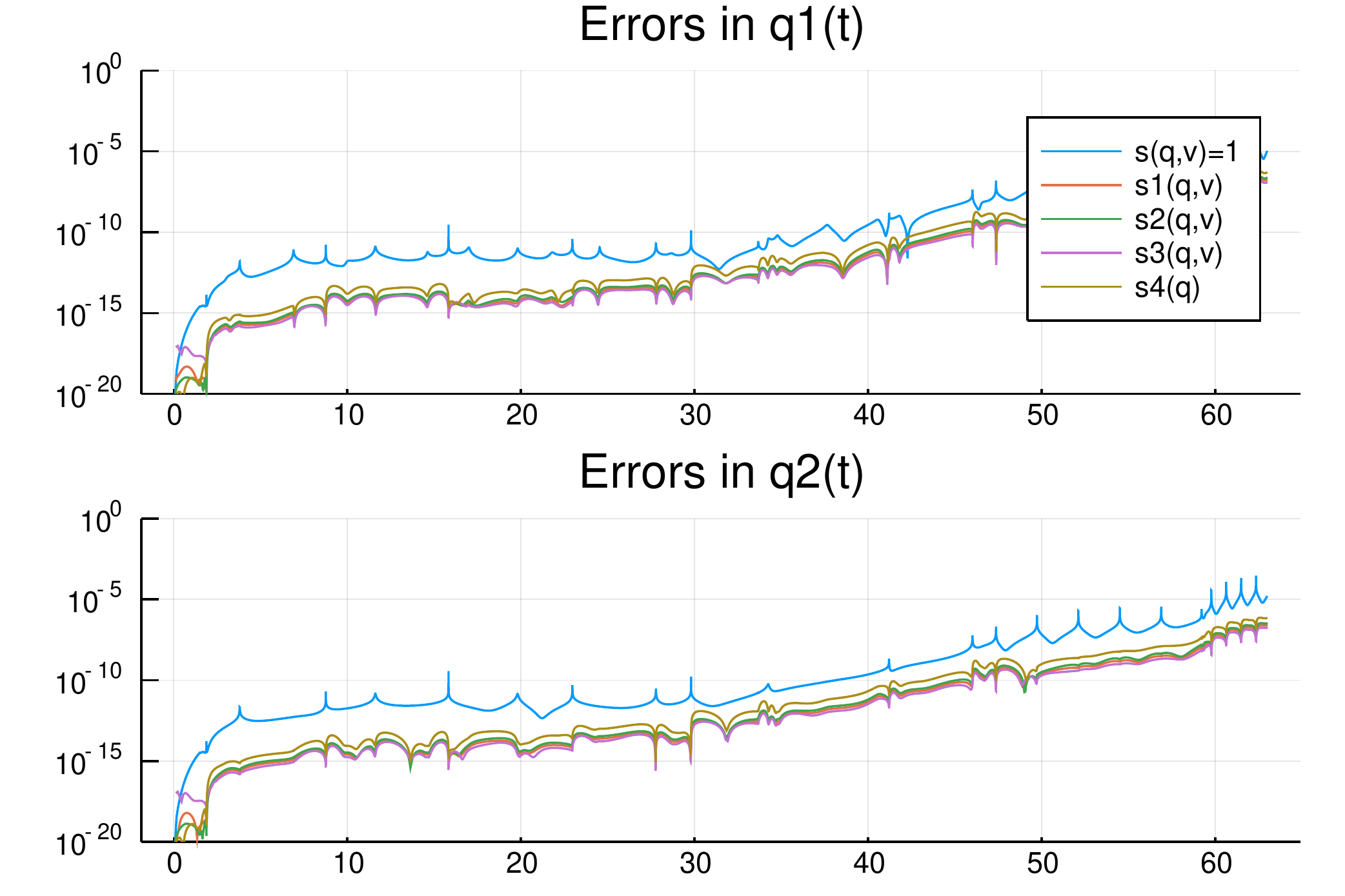}}
\end{center}
\caption{
\label{fig:PythagoreanQerr}
Evolution of position errors  for the Pythagorean 3-body problem for different numerical integrations. Constant step-size integration for $s_i(q,v)$, $i=1,2,3,4$, and adaptive integration for $s_0(q,v)=1$.
}
\end{figure}

In Figure~\ref{fig:PythagoreanRadius}, 
the  radius of convergence $\rho(q^k,v^k)$ is displayed in logarithmic scale together with the lower bound $1/L(q^k,v^k,\lambda_0)$ given by Theorem~\ref{th:L}. The minimum and maximum of the scaling factor $\rho(q^k,v^k)L(q^k,v^k,\lambda_0)$ are $3.41111$ and $7.9689$ respectively.

\begin{table}[!ht]
\caption[Pythagorean problem] 
{\small{Pythagorean problem}}
\label{tab:Pythagorean}       
\centering
{%
\begin{tabular}{  l l l l l l } 
\\
                 & $s_0(q,v)=1$     & $s_1(q,v)$ & $s_2(q,v)$ & $s_3(q,v)$ &  $s_4(q)$ \\ 
                 & (adaptive)      & & & & \\
 \hline
\\
 Width of strip    & $4.8 \times 10^{-6}$      & $2.06$    & $2.297$    & $1.835$  & $1.402$  \\  \\
Scaled width    & $4.8 \times 10^{-6}$      & $0.2532$    & $0.2716$    & $0.2359$  & $0.2891$  \\  \\
 Energy error    & $6 \times 10^{-12}$    & $9 \times 10^{-15}$  & $1.3 \times 10^{-14}$  & $6 \times 10^{-15}$  & $2.9 \times 10^{-14}$ \\ 
 \\  
   \hline
 \end{tabular}}
\end{table}

The widths of the strips around $[0,T]$ for the  time-renormalization functions $s_i(q,v)$, $i=1,2,3,4$ and its scaled versions,  together with  the width of the strip corresponding to $s_0(q,v)=1$,
are displayed in Table~\ref{tab:Pythagorean}. The corresponding maximum errors in energy are  also displayed in the same table.  The (non-scaled) widths of $s_i(q,v)$, $i=1,2$, are larger than their lower bound $2\beta=0.0888886$ given by Theorem~\ref{th:s}, as expected.
 The position errors $\|q_i(t^k)-q_i^k\|$ ($i=1,2$) for the different numerical integrations  are plotted in Figure~\ref{fig:PythagoreanQerr}.  A very similar plot (not shown) is obtained for $\|q_3(t^k)-q_3^k\|$.  A clear superiority of the constant step-size integrations of the time-renormalized equations over the adaptive numerical integration of the equations in physical time can be observed for all the considered choices of the time-renormalization functions.

\subsection{A binary-visitor problem} 
A binary with a mass ratio of $2:1$, eccentricity $e=0.9$, and semi-major axis $a=10$ is visited by a smaller third body with a $1:100$ mass ratio (with respect to the most massive body of the binary), which crosses their orbital plane  through their center of masses perpendicularly at velocity $\|v_3\|=100$ when the components of the binary are in their closest position. The third body has enough energy to avoid being captured by the binary.

\begin{figure}[t]
\begin{center}
\resizebox{!}{10cm}{\includegraphics{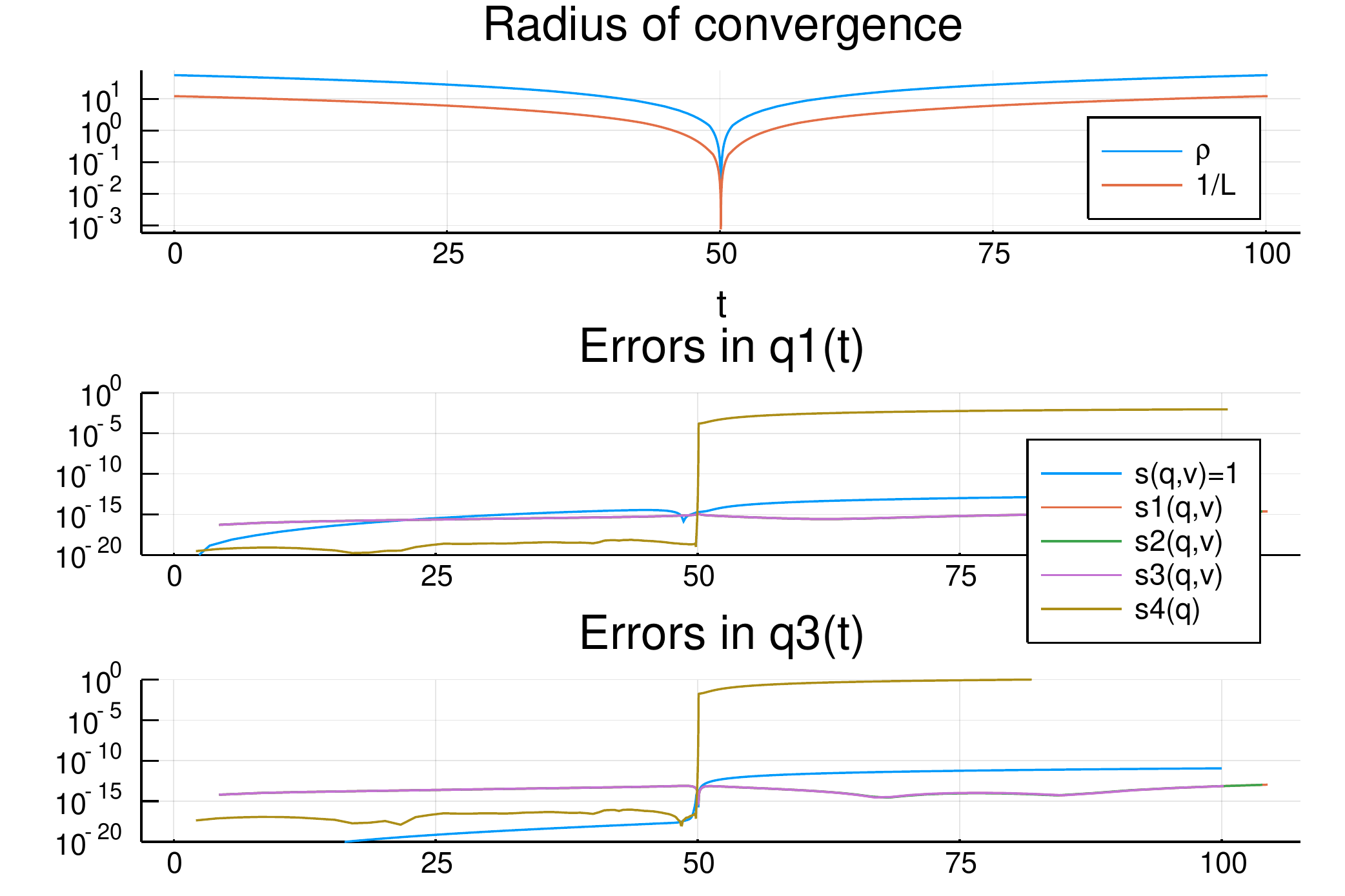}}
\end{center}
\caption{
\label{fig:radiusExample2}
Top: Values of radius of convergence $\rho(q^k,v^k)$ and its lower bound $1/L(q^k,v^k,\lambda_0)$ for the binary-visitor 3-body problem. Middle (resp. bottom): Evolution of position errors  of $m_1$ (resp. $m_3$) for the binary-visitor 3-body problem for different numerical integrations (constant step-size integration for $s_i(q,v)$, $i=1,2,3,4$, and adaptive integration for $s_0(q,v)=1$).}
\end{figure}

In Figure~\ref{fig:radiusExample2}, 
the  radius of convergence $\rho(q^k,v^k)$ is displayed in logarithmic scale together with the lower bound $1/L(q^k,v^k,\lambda_0)$ given by Theorem~\ref{th:L}. The minimum and maximum of $\rho(q^k,v^k)L(q^k,v^k,\lambda_0)$ are $4.32562$ and $5.47193282$ respectively.

\begin{table}[!ht]
\caption[A binary-visitor problem] 
{\small{A binary-visitor problem}}
\label{tab:binary-visitor}       
\centering
{%
\begin{tabular}{ l l l l l l } 
\\
                 & $s_0(q,v)=1$   & $s_1(q,v)$ & $s_2(q,v)$ & $s_3(q,v)$ &  $s_4(q)$ \\ 
                 & (adaptive)      & & & & \\
 \hline 
\\
 Width of strip    & $0.008$      & $ 3.2897$    & $3.2918$    & $3.2875$  & $0.0369$  \\  \\
Scaled width    & $0.008$      & $10.354$    & $10.354$    & $10.354$  & $0.4941$  \\  \\
 Energy error     & $5\times 10^{-15}$    & $6 \times 10^{-19}$  & $6 \times 10^{-19}$  & $6 \times 10^{-19}$  & $4 \times 10^{-6}$ \\ 
 \\  
   \hline
 \end{tabular}}
\end{table}

The widths of the strips around $[0,T]$ for the  time-renormalization functions  $s_i(q,v)$, $i=0,1,2,3,4$ and  their scaled versions, together with  the corresponding maximum errors in energy
are displayed in Table~\ref{tab:binary-visitor}.  In this example, one observes that the time-renormalization function $s_4(q)$ does not perform well. That ill-behavior is exacerbated if the velocity when the binary is crossed by the visitor at a higher velocity: the scaled width of the corresponding strip  is approximately $49.8/\|v_3\|$ as $\|v_3\| \uparrow \infty$. (For the equations in physical time, it is approximately $0.86/\|v_3\|$ as $\|v_3\| \uparrow \infty$.)

The position errors $\|q_i(t^k)-q_i^k\|$ ($i=1,3$) for the different numerical integrations are plotted in Figure~\ref{fig:radiusExample2}.  A clear superiority of the constant step-size integrations of the time-renormalized equations over the adaptive numerical integration of the equations in physical time can be observed  for $s_i(q,v)$, $i=1,2,3$ (their error curves are virtually identical). The very poor performance of $s_4(q)$ is in accordance with the narrowness of the strip noted in previous item.

\subsection{Solar system} 
We consider a 9-body model of the Solar System, with the Sun, Mercury, Venus, the Earth (the Earth-Moon barycenter), Mars, Jupiter, Saturn, Neptune, and Uranus. The initial values (and the product of the gravitational constant with their corresponding masses) are taken from DE430, Julian day (TDB) 2440400.5 (June 28, 1969). We consider a time interval of $2000$ days, that is, $t \in [0,T]$ with $T=2000$.

\begin{figure}[t]
\begin{center}
\resizebox{!}{7cm}{\includegraphics{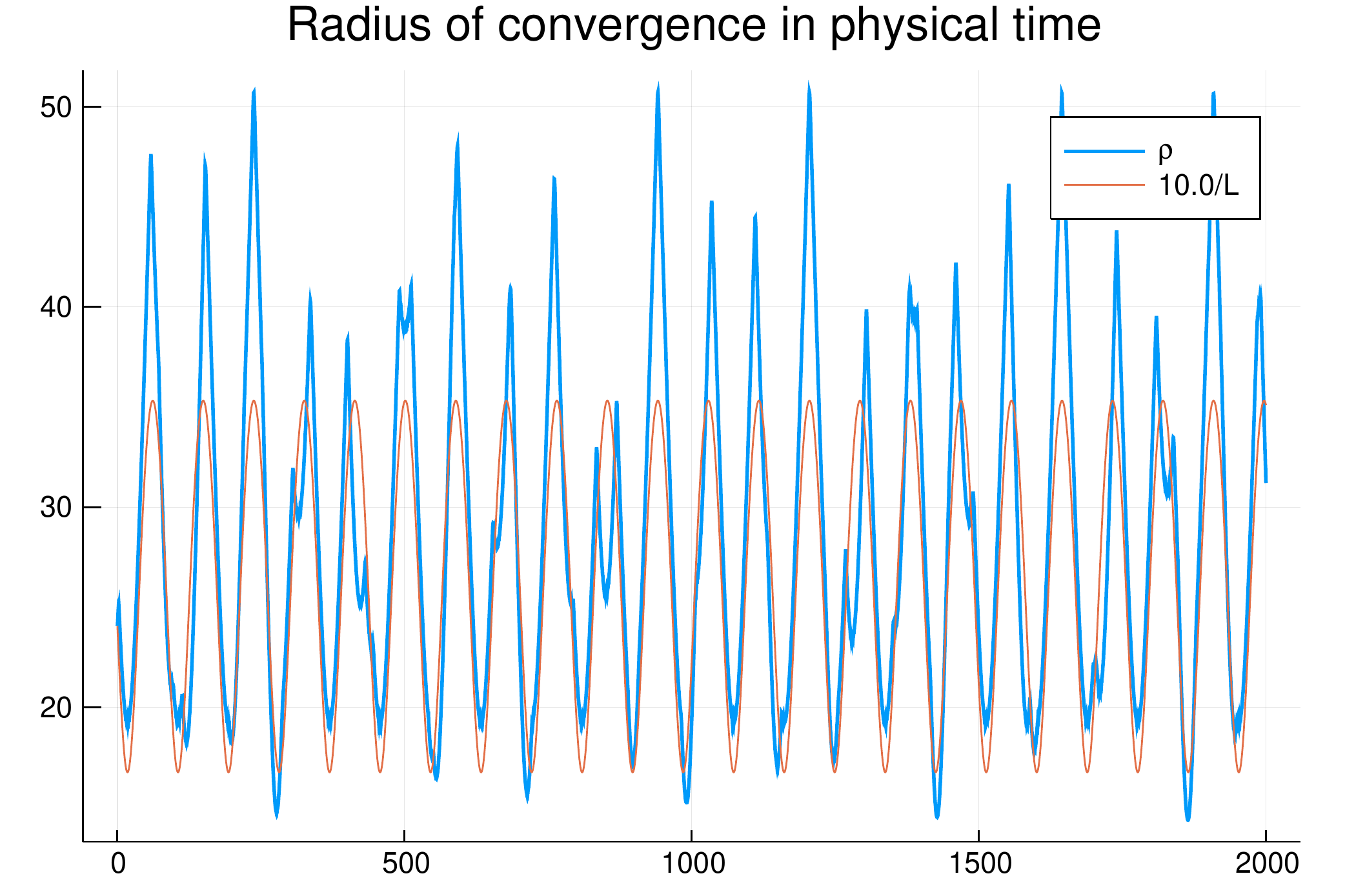}}
\end{center}
\caption{
\label{fig:radiusExample3}
Values of radius of convergence $\rho(q^k,v^k)$ together with its lower bound $1/L(q^k,v^k,\lambda_0)$ scaled by 10 for the Solar-System 9-body problem}
\end{figure}

\begin{figure}[t]
\begin{center}
\resizebox{!}{10cm}{\includegraphics{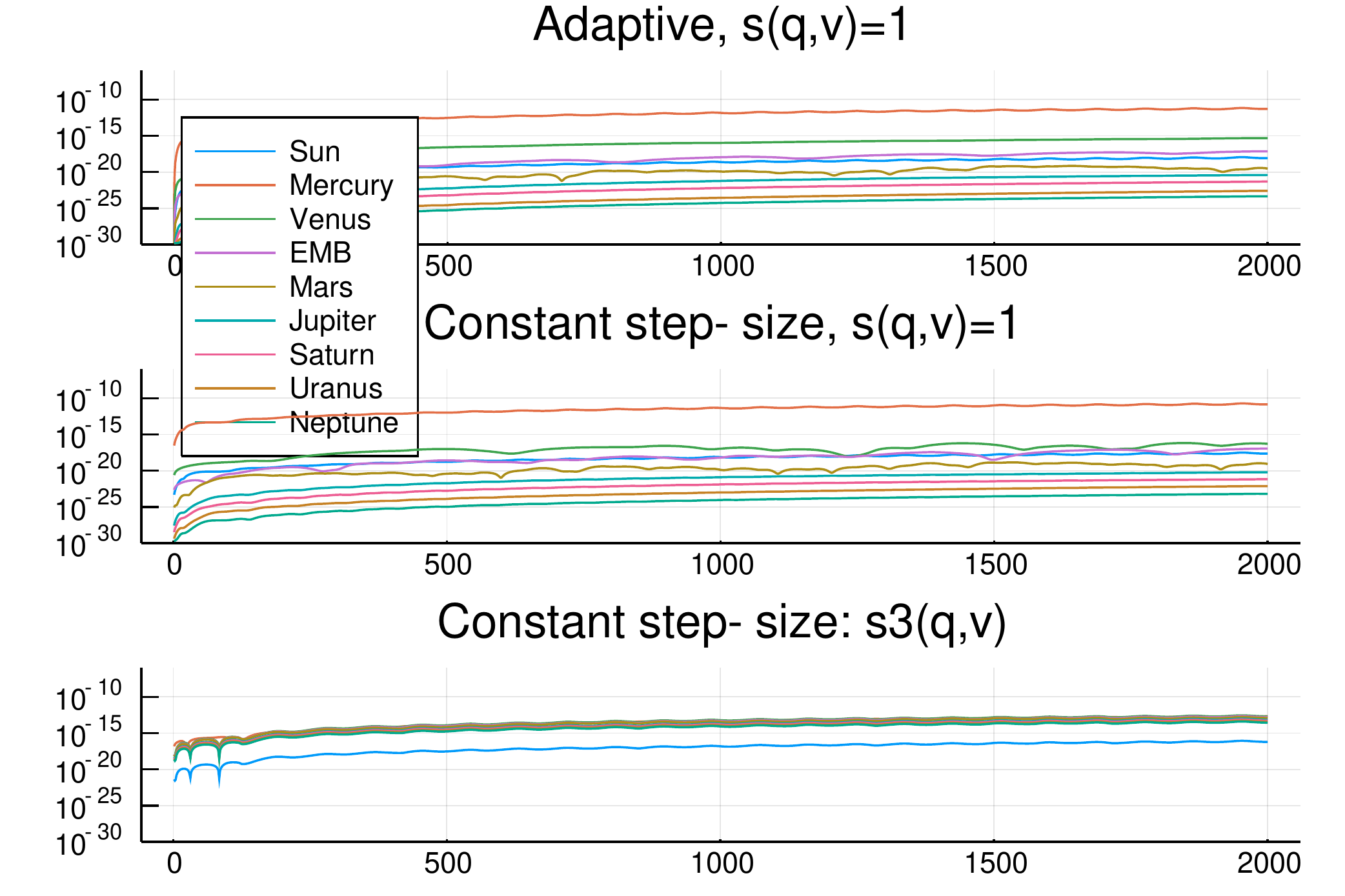}}
\end{center}
\caption{
\label{fig:QerrExample3}
Evolution of position errors  for the Solar System  9-body problem for 
adaptive numerical integration in physical time (top),  constant step-size in physical time (middle),  and constant step-size for the time-renormalized equations with $s_3(q,v)$ (bottom)}
\end{figure}

In Figure~\ref{fig:radiusExample3}, 
the  radius of convergence $\rho(q^k,v^k)$ is displayed together with the lower bound $1/L(q^k,v^k,\lambda_0)$  given by Theorem~\ref{th:L} scaled by a factor of $10$. The minimum and maximum of $\rho(q^k,v^k)L(q^k,v^k,\lambda_0)$ are $7.46384831$ and $14.48253$ respectively. One can observe that the variations of the radius of convergence is dominated by Mercury's influence, as expected. {In addition, one can observe smaller scale variations due to the influence of Venus. In fact, we have checked that the radius of convergence of the 9-body problem along that solution trajectory is virtually identical to the radius of convergence of the three-body problem consisting of the Sun, Mercury, and Venus. The local minima of the estimate of the radius of convergence in Theorem~\ref{th:L} approximately coincide with the local minima of the radius of convergence corresponding to the perihelion passages of Mercury, but the estimate fails detecting the smaller variations due to the influence of Venus.} 

The width of the strip around $[0,T]$ for the original equations, together with the widths corresponding to the time-renormalization functions $s_i(q,v)$, $i=1,2,3,4$ and their scaled versions are displayed in Table~\ref{tab:solar-system}. The time-renormalization functions $s_i(q,v)$, $i=1,2,3$, are not able to get wider scaled strips than in the case of the original equations. In this case, the widest strip is obtained in the case of $s_4(q)$.

\begin{table}[!ht]
\caption[Solar System example] 
{\small{Solar System example}}
\label{tab:solar-system}       
\centering
{%
\begin{tabular}{ l l l l l l } 
\\
                 & $s_0(q,v)=1$   & $s_1(q,v)$ & $s_2(q,v)$ & $s_3(q,v)$ &  $s_4(q)$ \\ 
 \hline 
\\
 Width of strip    & $41.02$      & $6.6117$    & $5.3245$    & $7.4277$  & $3.81365$  \\  \\
Scaled width    & $41.02$      & $40.07114$    & $38.007644$    & $28.8$  & $47.4334$  \\  \\   \hline
 \end{tabular}}
\end{table}

 We have included the integration with constant step-size of the original equations, in order to stress the fact that  neither adaptive step-size strategy nor time-renormalization are required for an accurate and efficient numerical integration of that example.
  The constant step-size integration of the original equations gives the largest maximum relative error in energy ($7.8976\times 10^{-16}$), followed by the adaptive integration of the original equations ($2.8864\times 10^{-16}$).  The maximum energy errors of the integration with  constant step-size with time-renormalization functions $s_i(q,v)$, $i=1,2,3,4$ are in all cases smaller. The smallest  maximum error in energy is obtained with   $s_3(q,v)$ ($7.12216\times 10^{-18}$).
As for the position errors $\|q_i(t^k)-q_i^k\|$ ($i=1,2,\ldots,9$), the time-renormalization function $s_3(q,v)$ gives the most precise results among the four time-renormalization functions $s_i(q,v)$, $i=1,2,3,4$.    In Figure~\ref{fig:QerrExample3}, the position errors for each of the 9 bodies are displayed for the adaptive numerical integration in physical time (top), the constant step-size integration in physical time (middle), and for the constant step-size implementation with time-renormalization function  $s_3(q,v)$. One can observe that the position error of Mercury is smaller in the later case, but this is achieved at the expense of having larger position errors for the rest of the bodies.

\subsection{Summary of numerical experiments}

{Although Theorem~\ref{th:L} does not give sharp estimates for the radius of convergence in the considered examples, the scaling between the actual radius and its estimate remains essentially  consistent along the solution of  any specific  problem, particularly in close encounters. We believe that the scaling factor does not increase significantly with the number of bodies provided that only a few of the terms $G\, (m_i + m_j)/\|q_i-q_j\|^2$ dominate in the expression for the estimate of the radius of convergence (as in single binary close encounters). In contrast, the scaling factor may increase (at most linearly) with the number of bodies whenever all such terms are of similar size.}

In the Pythagorean problem, a classical three-body problem with extreme close encounters, all the considered time-renormalization functions ($s_i(q,v)$, $i=1,2,3$, and $s_4(q)$) show similar behavior: Constant step-size numerical integration of the time-renormalized equations is  clearly more efficient than the adaptive integration of the original equations. There seem to be a correlation between the scaled width of the strip around the integration interval in $\tau$ for each  time-renormalization function, with the accuracy of the numerical integration with constant step-size.

The second example has been cooked up to illustrate the fact that $s_4(q)$ is not a uniform global time-renormalization function in the sense of Definition~\ref{def:timereg}. That is, there exists no $\beta>0$ such that all the solutions of  (\ref{eq:Nbodytau}) with  $s(q,v) := s_4(q)$ can be extended analytically in the strip $\{\tau \in \mathbb{C}\ : \ |\mathrm{Im}(\tau)| \leq \beta\}$. 
The function $s(q,v):= s_4(q)$ may be appropriate for the numerical integration with constant step-size of some solution trajectories of  (\ref{eq:Nbodytau}) {but not for trajectories involving  close encounters where the inertial term $\|v_i-v_j\|/(\|r_i-r_j\|)$ in the estimate $L(q,v,\lambda)^{-1}$ for the radius of convergence given in Theorem~\ref{th:L} dominates over the gravitational term (in planetary systems,  this is typically the case for planet-planet close encounters.)}

In the third example, the 9-body model of the Solar System, there is actually no need for time-renormalization, as it can safely be integrated numerically with constant step-size.  In fact, similar position errors have been obtained by integrating the original formulation of the considered initial value problem in constant step-size mode and with the adaptive implementation. The only close approaches correspond to the perihelion of Mercury, and are very mild compared to those occurring in previous two examples. This example has been chosen to illustrate the performance of time-renormalization in unfavorable conditions: 
The estimates of the radius of convergence given by Theorem~\ref{th:L} are able to detect the perihelion passages of Mercury, {but not the finer variations of the radius of convergence due to the influence of Venus.} Furthermore, the errors in energy of the numerical integrations of the time-renormalized equations are smaller than in the case of the integration with the physical time. The position errors of Mercury are also smaller for the time-renormalized formulations, but larger for the rest of the bodies. This is due to the hierarchical nature of the Solar System, and in particular of our 9-body model: the comparatively high-frequency oscillations due to Mercury's orbit have in practice limited  effect on the smoothness of the positions of the rest of the bodies in the original formulation. However, the high-frequency oscillations of Mercury's orbit are inherited by the time-renormalization function $s(q,v)$. This implies that the positions of all the bodies as functions of the  fictitious time $\tau$ become as oscillatory as Mercury's position.

\section{Concluding remarks}\label{sect:conclusions}
In this work, we have introduced new time-renormalizations defined as real analytic functions $s(q,v)$ depending on both velocities and positions. Our main contribution has been to exhibit  {\em uniform global} time-renormalizations such that any solution of the reformulated $N$-body system is defined (as a function of the complex variable $\tau$) on a strip of width $2\beta=0.0888886$ along the complete real line $\R$. 
As a by-product, a global power series representation of the solutions of the $N$-body problem is obtained.
Noteworthy, our results improve over previous ones~\cite{wang,babadzanjanz2} by being insensitive to the possible occurrence of vanishing mass-ratios in the system.  

{As a first  step in the derivation of  our uniform global time-renormalizations, we have obtained an estimate of the radius of convergence of the local power series expansion of the solution of the $N$-body problem. Although such estimates are generally not sharp (in our numerical experiments, the scaling factor between the actual radius of convergence and our estimates varies roughly between 3 and 15), they capture the main variations of the radius of convergence for trajectories with close encounters. }

Uniform global time-renormalizations allow for the numerical integration with constant time-steps (in the new fictitious time) without degrading the accuracy of the computed trajectories with close approaches. 
We have performed some preliminary numerical experiments on three different examples: two three-body problems with close encounters, and a 9-body problem corresponding to the Solar System. The performance of time-renormalization compared to a highly efficient adaptive general purpose integrator has been checked for short integration intervals. We plan to address more exhaustive numerical tests, and in particular, to check the long term behavior of the proposed time-renormalization functions for different symplectic numerical integrators. 

\section*{Acknowledgements}

AM and PC have received funding from the Ministerio de Econom\'{\i}a y Competitividad (Spain) through project MTM2016-77660-P (AEI/FEDER, UE).  PC acknowledges funding by INRIA through its Sabbatical program and thanks the University of the Basque Country for its hospitality. In addition, MA, JM, and AM have received funding from the Department of Education of the Basque Government through the Consolidated Research Group MATHMODE (IT1294-19)

%


\begin{thebibliography}{plain}

\bibitem{babadzanjanz} {\sc L.K.~Babadzanjanz}, {\em Existence of the continuations in the $N$-body problem}, Celestial Mechanics, 20 (1979), pp. 43--57.


\bibitem{babadzanjanz2} {\sc L.K.~Babadzanjanz}, {\em On the global solution of the $N$-body problem}, Celestial Mechanics and Dynamical Astronomy, 56 (1993), pp. 427--449.



\bibitem{budd} {\sc C.J.~Budd, B.~Leimkuhler,  M.D.~Piggott}, 
{\em Scaling invariance and adaptivity}, Appl. Numer. Math.  39 (2001), pp. 261--288.


\bibitem{blanes}
{\sc S.~Blanes, C.J.~Budd}, {\em Adaptive geometric integrators for Hamiltonian problems
with approximate scale invariance},  SIAM J. Sci. Comput., 26 (2005), 1089--1113.

\bibitem{hille} {\sc E.~Hille}, {\em {O}rdinary
  {D}ifferential {E}quations in the {C}omplex {D}omain}, Dover Publications, John Wiley \& Sons, 1976.
  
\bibitem{hairer06gni}
{\sc E.~Hairer, C.~Lubich, and G.~Wanner}, {\em Geometric {N}umerical
  {I}ntegration. {S}tructure-{P}reserving {A}lgorithms for {O}rdinary
  {D}ifferential {E}quations}, Springer-Verlag, {S}econd~ed., 2006.

\bibitem{laskar} {\sc J.~Laskar}, {\em  A numerical experiment on the chaotic behaviour of the Solar System}, Nature, 338 (1989), pp. 237--238. 

\bibitem{laskar2019} {\sc A.C.~Petit, J.~ Laskar, G.~Bou\'e, M.~Gastineau}, {\em High-order regularised symplectic integrator for collisional planetary systems}, Astronomy and Astrophysics (2019), 628:32. 

\bibitem{mikkola} {\sc S.~Mikkola, K.~Tanikawa}, {\em Explicit symplectic algorithms for time-transformed Hamiltonians}, 
Celest. Mech. \& Dyn. Astr. 74 (1999), pp. 287--295.

\bibitem{painleve} {\sc P.~Painlev\'e},   {\em Le\c cons sur la th\'eorie analytique des \'equations diff\'erentielles}, Herman, Paris, 1897.

\bibitem{tremaine} {\sc M.~Preto,  S.~Tremaine},  {\em A class of symplectic integrators with adaptive timestep for separable Hamiltonian systems}, Astron. J. 118 (1999), pp. 2532--2541.


\bibitem{rackauckas} {\sc C.~Rackauckas, Q.~Nie},  {\em  DifferentialEquations.jl -- A Performant and Feature-Rich Ecosystem for Solving Differential Equations in Julia}, Journal of Open Research Software. 5 (2017),  p.15. DOI: http://doi.org/10.5334/jors.151.

\bibitem{sundman} {\sc K. F.~Sundman}, {\em M\'emoire sur le probl\`eme des trois corps},  Acta Mathematica. 36 (1912), pp. 105--179.

\bibitem{Szebehely}
{\sc V.~Szebehely, C.F.~ Peters}, {\em Complete Solution of a General Problem of Three Bodies}, Astronomical Journal 72 (1967), p. 876.

\bibitem{poincare} {\sc H.~Poincar\'e}, {\em  M\'emoire sur le probl\`eme des trois corps et les \'equations de la Dynamique},  Archives Henri Poincar\'e (1989), http://henri-poincare.ahp-numerique.fr/items/show/64. 


\bibitem{verner} {\sc J.H.~Verner}, {\em  Numerically optimal Runge-Kutta pairs with interpolants}, Numerical Algorithms, 53 (2010), pp. 383--396. 

\bibitem{wang} {\sc Q.-D. Wang}, {\em The global solution of the $n$-body problem}, Celestial Mechanics and Dynamical Astronomy, 50 (1): 73--88, 1991.
\end{thebibliography}
\end{document}